\renewcommand\theequation{\thesection.\@arabic\c@equation}
\newtheorem{theorem}{Theorem}[section]
\newtheorem{remark}[theorem]{Remark}
\newtheorem{claim}[theorem]{Claim}
\chardef\csname pre amssym.def
\def\undefine#1{\let#1\undefined}
\def\newsymbol#1#2#3#4#5{\let\next@\relax
 \ifnum#2=\@ne\let\next@\msafam@\else
 \ifnum#2=\tw@\let\next@\msbfam@\fi\fi
 \mathchardef#1="#3\next@#4#5}
\def\mathhexbox@#1#2#3{\relax
 \ifmmode\mathpalette{}{\m@th\mathchar"#1#2#3}%
 \else\leavevmode\hbox{$\m@th\mathchar"#1#2#3$}\fi}
\def\hexnumber@#1{\ifcase#1 0\or 1\or 2\or 3\or 4\or 5\or 6\or 7\or 8\or
 9\or A\or B\or C\or D\or E\or F\fi}
\font\teneufm=eufm10 \font\seveneufm=eufm7 \font\fiveeufm=eufm5
\newcommand{\ww}{\mbox{$\bf w $}}
\newcommand{\WW}{\mbox{$\bf W $}}
\newcommand{\bb}{\mbox{\boldmath $b$}}
\newcommand{\uu}{\mbox{\boldmath $u$}}
\newcommand{\vv}{\mbox{\boldmath $v$}}
\newcommand{\zz}{\mbox{\boldmath $z$}}
\newcommand{\xx}{\mbox{\boldmath $x$}}
\newcommand{\AAA}{\mathbb{A}}
\newcommand{\RR}{\mathbb{R}}
\newcommand{\LLL}{\mathbb{L}}
\newcommand{\norm}[1]{\Vert #1 \Vert}
\newcommand{\LL}{{L^2(\RR^3)}}
\newcommand{\LT}{{L^2(\RR^3)}}
\newcommand{\LP}{{L^p(\RR^3)}}
\newcommand{\Ez}{{\mathcal{E}}_{\zz}}
\newcommand{\Eu}{{\mathcal{E}}_{\uu}}
\newcommand{\Ew}{{\mathcal{E}}_{\ww}}
\newcommand{\QQ}{\mbox{\boldmath $Q$}}
\newtheorem{thm}{Theorem}[section]
\newtheorem{cor}[thm]{Corollary}
\newtheorem{rem}[thm]{Remark}
\newtheorem{lemma}{Lemma}
\begin{document}

\title[Asymptotic Inequalities]{High order asymptotic inequalities for some dissipative systems}

\author[P. Braz e Silva]{P. Braz e Silva}
\address[P. Braz e Silva]{Departamento de Matem\'atica. Universidade Federal de Pernambuco, 
	CEP 50740-560, Recife - PE. Brazil}
\email{pablo.braz@ufpe.br}

\author[R. H. Guterres]{R. H. Guterres}
\address[R. H. Guterres]{Departamento de Matem\'atica. Universidade Federal de Pernambuco, 
	CEP 50740-560, Recife - PE. Brazil}
\email{rguterres.mat@gmail.com}

\author[C. F. Perusato]{C. F. Perusato}
\address[C. F. Perusato]{Departamento de Matem\'atica. Universidade Federal de Pernambuco, 
 CEP 50740-560, Recife - PE. Brazil}
\email{cilon.perusato@ufpe.br}

\author[P. R. Zingano]{P. R.  Zingano}
\address[P. R. Zingano]{Departamento de Matem\'atica Pura e Aplicada, Instituto de Matem\'atica e Estatística. Universidade Federal do Rio Grande do Sul, CEP 91509-900, Porto Alegre - RS, Brazil}
\email{zingano@gmail.com}

\thanks{P. Braz e Silva was partially supported by CAPES-PRINT, Brazil, $\#$88881.311964/2018-01, 
CAPES-MATHAMSUD $\#$88881.520205/2020-0, and CNPq, Brazil, $\#$308758/2018-8 and $\#$432387/2018-8}

\thanks{R. H Guterres  acknowledges funding from Bolsa FACEPE Grant BFP-0067-1.01/19.} 

\thanks{C. Perusato was partially supported by CAPES--PRINT - 88881.311964/2018--01 and Propesq-UFPE - 08-2019 (Qualis A). }

\keywords{Sharp inequalities for large time, asymptotic behaviour, decay rates, micropolar equations, high-order norms}

\subjclass[2000]{35B40  (primary), 35Q35 (secondary)}

\date{\today}

\begin{abstract}
We obtain some important fundamental inequalities concerning the long time behavior of high order derivatives for solutions of some dissipative systems in terms of their $L^2$ algebraic decay. Some of these inequalities have not been observed in the literature even for the fundamental Navier-Stokes equations.To illustrate this new approach, we derive bounds for the asymmetric incompressible fluids equations, where an improved inequalities established for the micro-rotational field. Finally, we show how this technique can be applied for other dissipative systems.        
\end{abstract}

\maketitle

\section{Introduction}
Asymptotic inequalities for dissipative systems are a topic of high interest, specially in the context of fluid mechanics models, such as the Navier-Stokes equations and related systems, like the asymmetric incompressible fluids equations (micropolar Navier-Stokes), among others.  Here, we derive some new inequalities for high order derivatives of solutions for such systems.
The method used here is, by itself, interesting, since it is a direct method based on energy inequalities. Even tough it is direct, it depends on very subtle and technical arguments, which may eventually be used in different contexts. To describe it, we show how to obtain bounds for solutions of the so called asymmetric, also known as micropolar, fluids system 
\begin{eqnarray*}
	\mbox{\boldmath $u$}_t
	+
	\mbox{\boldmath $u$} \cdot \nabla \mbox{\boldmath $u$}
	+
	\nabla \:\!{ \sf{p}}
	& = & 
	(\:\!\mu + \chi\:\!) \, \Delta \mbox{\boldmath $u$}
	\,+\,
	\chi \, \nabla \times {\bf w}, \\
	{\bf w}_t
	+
	\mbox{\boldmath $u$} \cdot \nabla \mbox{\bf w}
	& = & 
	\gamma \;\! \, \Delta \mbox{\bf w}
	\,+\,\kappa\, \nabla \:\!(\:\!\nabla \cdot\;{\bf w} \:\!)
	\,+\,
	\chi \, \nabla \times\, \mbox{\boldmath $u$}
	\,-\, 2 \, \chi \, {\bf w}, \\
	\nabla \cdot \mbox{\boldmath $u$}(\cdot,t) &  =  & 0,
\end{eqnarray*}
with suitable initial data. 
We note that the results obtained here
for the linear velocity 
$\mathbf{u}$ in system above above also hold for solutions of the classical Navier-Stokes equations. 
In particular, as far as we know, the results on the large time dynamics given in Theorem \ref{thm2} below are new even 
for solutions of the Navier-Stokes equations.


In 1984, Kato \cite{MR760047} and Masuda \cite{MR767409} showed that solutions $\mathbf{u}$ of the Navier-Stokes 
equations satisfy $ \|\uu(\cdot,t)\|_{L^2(\mathbb{R}^{3})} \to 0 $ as $ t \to \infty $, finally solving an open problem left 
by Leray \cite{MR1555394}. Since then, several works have made considerable progress in this direction (see e.g \cite{MR1749867, MR1312701, MR837929, MR1396285,  MR775190}). In particular, the well-known Fourier Splitting method, developed by M. E. Schonbek \cite{MR571048,MR837929,MR775190}, is a powerful tool to prove that the $L^2$ norm of solutions for viscous conservation laws and for Navier-Stokes equations decay with algebraic rate depending on the initial data. 
Recently, Hagstrom et al. \cite{MR4021907} derived an asymptotic inequality for arbitrary order spatial derivatives of Leray solutions for the incompressible Navier-Stokes equations in $\RR^n$ with $n \leq 4$. This technique may also be extended to dissipative systems with linear damping, such as the asymmetric fluids system, as we show in details here (see Theorem \ref{thm1}). 
The bounds for the linear velocity $\mathbf{u}$ are the same as for solutions of classical Navier-Stokes equations obtained in \cite{MR4021907}. The bounds for the rotational velocity $\mathbf{w}$ are stronger, as can be seen 
in Theorems \ref{thm1} and \ref{thm2}. 

 Firstly introduced by Eringen in \cite{MR0204005}, the asymmetric fluids model is a generalization of the classical Navier-Stokes equations that takes into account the microstructure of the fluid particles. It describes flows of fluids whose particles undergo translations and rotations as well. Asymmetric fluids flows are governed by the equations
\begin{subequations}\label{micropolar}
	\begin{equation}\label{uu_equation}
	\mbox{\boldmath $u$}_t
	\;\!+\,
	\mbox{\boldmath $u$} \cdot \nabla \mbox{\boldmath $u$}
	\,+\;\!
	\nabla \:\!{ \sf{p}}
	\;=\;
	(\:\!\mu + \chi\:\!) \, \Delta \mbox{\boldmath $u$}
	\,+\,
	\chi \, \nabla \times {\bf w},
	\end{equation}
	\begin{equation}\label{ww_equation}
	{\bf w}_t
	\;\!+\,
	\mbox{\boldmath $u$} \cdot \nabla \mbox{\bf w}
	\;=\;
	\gamma \;\! \, \Delta \mbox{\bf w}
	\,+\,\kappa\, \nabla \:\!(\:\!\nabla \cdot\;{\bf w} \:\!)
	\,+\,
	\chi \, \nabla \times\, \mbox{\boldmath $u$}
	\,-\, 2 \, \chi \, {\bf w},
	\end{equation}
	\begin{equation}
	\nabla \cdot \mbox{\boldmath $u$}(\cdot,t)  = 0,
	\end{equation}
\end{subequations}
with initial data 
$ 
\zz_0:=(\:\!\uu_0, {\bf w}_0) \in
{\bf L}^2_{\sigma}(\mathbb{R}^{n})
\!\times\!
{\bf L}^2(\mathbb{R}^{n}) $
and  
\begin{equation*}
\| \zz(\cdot,t) - \zz_0 \|_{L^2(\RR^n)}  \to 0 \mbox{, as } t \to 0,
\end{equation*}
where $\zz:= (\uu,\ww) $. 
In system (\ref{micropolar}), the constants
$ \mu, \gamma,\kappa > 0 $ are kinematic and spin viscosities and $ \chi > 0 $ is the vortex viscosity. The functions 
\mbox{$ \mbox{\boldmath $u$} = \mbox{\boldmath $u$}(\xx,t) $}, \mbox{$ \mbox{\boldmath $w$} = \mbox{\boldmath $w$}(\xx,t) $}, 
and $ \sc{p} = \sc{p}(\xx,t) $
are the flow velocity, micro-rotational velocity and the total pressure, respectively, for $ t>0 $ and $ x \in \RR^n $.
Here,
$ \!\;\!\mbox{\boldmath $L$}^{2}_{\sigma}(\mathbb{R}^{n}) $
denotes the
space
of solenoidal fields
$ \:\!\mbox{\bf v} = (v_{1}, v_{2}, \ldots, v_{n}) \!\:\!\in
\mbox{\boldmath $L$}^{2}(\mathbb{R}^{n}) \equiv L^{2}(\mathbb{R}^{n})^{n} \!\:\!$
with
$ \nabla \!\cdot \mbox{\bf v} \!\;\!= 0 $
in the distributional sense. For a discussion of the physical meaning and the derivation of system \eqref{micropolar}, see \cite{MR167060,MR0204005,MR1711268}. Observe that taking $\ww=0$ in system \eqref{micropolar}, one obtains the classical incompressible Navier-Stokes equations. 

Consider the linear system obtained simply by dropping out the nonlinear terms from system \eqref{micropolar}

\begin{subequations}\label{eqn:linear-system}
	\begin{equation}\label{L_uu_equation}
	\mbox{\boldmath $\bar{u}$}_t
	\;\!+\,
	\nabla \:\!{ \sf{\bar{p}}}
	\;=\;
	(\:\!\mu + \chi\:\!) \, \Delta \mbox{\boldmath $\bar{u}$}
	\,+\,
	\,\chi \, \nabla \times {\bf \bar{w}},
	\end{equation}
	\begin{equation}\label{L_ww_equation}
	{\bf  \bar{w}}_t
	\;\!+\,
	\gamma \;\! \, \Delta {\bf {\bar{\ww}}}
	\,+\,\kappa\, \nabla \:\!(\:\!\nabla \cdot\;{\bf \bar{w}} \:\!)
	\,+ \,\chi \, \nabla \times\, \mbox{\boldmath $\bar{u}$}
	\,-\, 2 \, \chi \, {\bf \bar{w}},
	\end{equation}
	\begin{equation}
	\nabla \cdot \mbox{\boldmath $\bar{u}$}(\cdot,t)  = 0.
	\end{equation}
\end{subequations}
Our main result compares the evolution of solutions $\zz  = ( \mathbf{u} , \mathbf{w})$ for the nonlinear initial value problem \eqref{micropolar} with the evolution 
of solutions $\bar{\zz} = ( \mathbf{\bar{u}} , \mathbf{\bar{w}})$
 for the linear problem \eqref{eqn:linear-system} with the same initial conditions.

%
%

For convenience, define $ \nu:= \min\{\mu,\gamma\} $ and $\displaystyle \lambda_0(\alpha):= \limsup_{t \to \infty} \,t^\alpha\,\|\uu(\cdot,t) \|_{L^2(\RR^3)}$, for $ \alpha \geq 0 $.
We assume that 
\begin{equation}\label{eqn:lambda-assumption}
\lambda_0(\alpha) < \infty,
\end{equation}
$ \text{for some }\,\,\alpha \geq 0$. 
\begin{remark}
In \cite{MR3493117,MR2493562, MR3355116}, the authors define the so called decay indicator. With this tool, a function $ \vv_0 \in L^2 $ is classified accurately with respect to the
algebraic decay rate of the flow initiated from $ \vv_0 $. From there, the authors provide upper bounds for solutions of the system 
in terms of the so called decay character. All of these facts ensure that the assumption \eqref{eqn:lambda-assumption} above holds for some values of $ \alpha \geq 0 $.  
\end{remark}
We state our results. 
\begin{thm}\label{thm1}
	Let $ \zz_0 \in {\bf L}^2_\sigma(\RR^3) \times {\bf L}^2(\RR^3)$. If 
	$\zz(\cdot,t)  = (\mathbf{u} ( \cdot , t) , \mathbf{w}( \cdot , t) )$ 
	is a weak solution of system \eqref{micropolar} with initial data $\zz_0$, then
	\begin{subequations}\label{inequality}
		\begin{equation}\label{inequality_z}
		\limsup_{t \to \infty}\, t^{\alpha+\frac{m}{2}} \norm{D^m \zz(\cdot,t)}_\LT \leq  K_{\alpha,m}\;\;\nu^{-m/2}\;\; \lambda_0(\alpha)
		\end{equation}
		and, for the micro-rotational field $\mathbf{w}$, 
		\begin{equation}\label{inequality_w}
		\limsup_{t \to \infty} t^{\alpha+\frac{m+1}{2}} \norm{D^m \ww(\cdot,t)}_{\LT} \leq  \frac{K_{\alpha,m+1}}{2}\;\;\nu^{-\frac{m+1}{2}} \;\; \lambda_0(\alpha)
		\end{equation}	
	\end{subequations}
	for every $ \alpha \geq0 $ and $ m \geq 1 $, where
	\begin{equation}\label{cte_algebraic}
	K_{\alpha,m} =  \min_{\delta>0} \Bigg\{ \delta^{-1/2} \prod_{j=0}^{m} (\alpha + \frac{j}{2} +\delta)^{1/2}  \Bigg\}. 
	\end{equation}	
\end{thm}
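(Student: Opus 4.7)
I would follow the iterated weighted-energy method of Hagstrom--Lorz--Lunasin--Titi~\cite{MR4021907}, adapted to exploit the additional linear damping $-2\chi\ww$ present in \eqref{ww_equation}. The argument splits into two pieces: the joint estimate \eqref{inequality_z} for $\zz=(\uu,\ww)$, which essentially mirrors the Navier--Stokes case of \cite{MR4021907}, followed by a Duhamel refinement yielding the extra half-power of $t$ in \eqref{inequality_w}. As a starting point, applying $D^m$ to \eqref{uu_equation}--\eqref{ww_equation} and testing against $D^m\uu$ and $D^m\ww$, the two coupling contributions combine through the identity $\int (\nabla\times f)\cdot g\,dx = \int f\cdot(\nabla\times g)\,dx$, and one Young estimate absorbs their sum into the dissipation together with the damping $4\chi\|D^m\ww\|^2$, producing
\[
\tfrac{d}{dt}\|D^m\zz\|_\LT^2 \,+\, 2\nu\,\|D^{m+1}\zz\|_\LT^2 \,+\, 2\chi\,\|D^m\ww\|_\LT^2 \;\leq\; \mathcal N_m(t),
\]
where $\mathcal N_m(t)$ collects the convective nonlinearities.

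For \eqref{inequality_z} I would multiply this inequality (temporarily ignoring $\mathcal N_m$) by $t^{2\alpha+m+2\delta}$, $\delta>0$, and integrate on $(t_0,t)$. One integration by parts in time gives the recursion
\[
t^{2\alpha+m+2\delta}\|D^m\zz(\cdot,t)\|_\LT^2 \,+\, 2\nu\,A_{m+1}(t) \;\leq\; (2\alpha+m+2\delta)\,A_m(t) \,+\, O(1),
\]
where $A_k(t):=\int_{t_0}^{t} s^{2\alpha+k-1+2\delta}\|D^k\zz(\cdot,s)\|_\LT^2\,ds$. Iterating the first half of the recursion all the way down to $k=0$ and bounding $A_0(t)\leq (2\delta)^{-1}\lambda_0(\alpha)^2 t^{2\delta}(1+o(1))$ using \eqref{eqn:lambda-assumption} (the required same-rate decay of $\|\ww\|_\LT$ follows at once from Duhamel applied to \eqref{ww_equation} against the kernel $e^{-2\chi t}$), then plugging back into the second half and taking $\limsup$ yields
\[
\limsup_{t\to\infty} t^{2\alpha+m}\|D^m\zz(\cdot,t)\|_\LT^2 \;\leq\; \frac{1}{\nu^m\,\delta}\prod_{j=0}^{m}\Bigl(\alpha+\tfrac{j}{2}+\delta\Bigr)\,\lambda_0(\alpha)^2,
\]
whose square root, minimized over $\delta>0$, is precisely \eqref{inequality_z} with $K_{\alpha,m}$ as in \eqref{cte_algebraic}.

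The sharper rate \eqref{inequality_w} comes from the $\ww$-equation alone. Testing $D^m$\eqref{ww_equation} against $D^m\ww$ and handling the coupling by Young with weight $2\chi$ (so as to absorb $2\chi\|D^m\ww\|^2$ out of the full damping $4\chi\|D^m\ww\|^2$) yields
\[
\tfrac{d}{dt}\|D^m\ww\|_\LT^2 \,+\, 2\chi\,\|D^m\ww\|_\LT^2 \;\leq\; \tfrac{\chi}{2}\,\|D^{m+1}\uu\|_\LT^2 \,+\, \widetilde{\mathcal N}_m(t),
\]
a first-order linear differential inequality in time. Duhamel against the exponential kernel, together with the elementary asymptotic $\int_{t_0}^{t} e^{-2\chi(t-s)}s^{-p}\,ds \sim (2\chi)^{-1} t^{-p}$ for any $p>0$, combined with \eqref{inequality_z} already in hand at level $m+1$, delivers
\[
\limsup_{t\to\infty} t^{2\alpha+m+1}\|D^m\ww(\cdot,t)\|_\LT^2 \;\leq\; \tfrac{1}{4}\,K_{\alpha,m+1}^2\,\nu^{-(m+1)}\,\lambda_0(\alpha)^2,
\]
whose square root is exactly \eqref{inequality_w}; the factor $\tfrac12$ comes from the combined constants in Young ($\chi/2$) and in the Duhamel asymptotic ($1/(2\chi)$).

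\textbf{Main obstacle.} The delicate point is not the weighted recursion, which is algebraic, but controlling the nonlinear remainders $\mathcal N_m(t)$ and $\widetilde{\mathcal N}_m(t)$ uniformly across the induction. After Leibniz expansion and integration by parts, $D^m(\uu\cdot\nabla\uu)$ and $D^m(\uu\cdot\nabla\ww)$ produce mixed products of intermediate derivatives whose $L^2$-norms must decay strictly faster than $t^{-2\alpha-m}$; this requires inductively invoking \eqref{inequality_z} at all lower orders together with Gagliardo--Nirenberg interpolation, so that each error is uniformly $o(1)$ at the weighted scale and falls out of the $\limsup$. Propagating this smallness consistently up the induction while preserving the sharp \emph{single}-parameter minimization in $\delta$ — rather than accumulating a product of per-level optima — is the subtlest ingredient of the proof.
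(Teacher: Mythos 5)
Your proposal captures the paper's strategy and constant algebra for \eqref{inequality_z}: the same iterated weighted energy recursion, with the $\delta$ vs.\ $2\delta$ weight a harmless reparametrization that reproduces $K_{\alpha,m}$ exactly. Two implementation differences are worth flagging. For \eqref{inequality_w}, you close with an energy-plus-Gronwall argument on the $\ww$-equation alone: Young's inequality gives the prefactor $\chi/2$, the kernel asymptotic $\int_{t_0}^t e^{-2\chi(t-s)}s^{-p}\,ds\sim(2\chi)^{-1}t^{-p}$ gives $1/(2\chi)$, and $\tfrac{\chi}{2}\cdot\tfrac{1}{2\chi}=\tfrac14$ in the squared norm yields the factor $\tfrac12$; the paper instead works with the mild-solution representation via the Lam\'e semigroup and Lemma~\ref{lema-lame-operator} and reaches the same constant, so your route trades that lemma for one more integration by parts on the convective term. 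For the nonlinear remainders, which you rightly call the subtlest step, note that the paper does \emph{not} inductively invoke \eqref{inequality_z} at lower orders (which, as you worry, could accumulate per-level optima). It uses the fixed interpolation of Lemma~\ref{lema-Sobolev-inequality}, $\|D^\ell f\|_{L^\infty}\|D^{m-\ell}f\|_{L^2}\le\|f\|_{L^2}^{1/2}\|Df\|_{L^2}^{1/2}\|D^{m+1}f\|_{L^2}$, to write the entire nonlinear contribution as $\|\zz\|^{1/2}_\LT\|D\zz\|^{1/2}_\LT\|D^{m+1}\zz\|^2_\LT$ and absorbs it into the dissipation using only that $\|\zz\|_\LT$ stays bounded and $\|D\zz\|_\LT\to 0$; this single fixed bound is precisely what removes the pitfall you describe and keeps the $\delta$-minimization global rather than per-level. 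Finally, your parenthetical that the same-rate decay of $\|\ww\|_\LT$ ``follows at once from Duhamel'' is true in spirit but conceals a bootstrap: the paper proves $\tilde\lambda_0(\alpha)=\lambda_0(\alpha)$ by iterating \eqref{sup_des_chi} at $m=0$, gaining half a power of $t$ per pass (Claim~\ref{rmk-remove-w}), rather than by a one-shot Duhamel estimate.
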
	
By the Calderón-Zygmund theory, one directly obtains the following high order inequality for the pressure:
\begin{cor} The pressure $ p ( \cdot , t )$ satisfies 
	\begin{equation*}
	\limsup_{t \to \infty}\, t^{2\alpha+\frac{m}{2}+\frac{3}{4}} \norm{D^m p(\cdot,t)}_\LT \leq  C_m\; \nu^{-(\frac{m}{2}+\frac{3}{4})} \, \lambda_0(\alpha)\,^2 ,
	\end{equation*}
	where $C_m$ is a positive constant depending only on $m$.
\end{cor}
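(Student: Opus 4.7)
The strategy is to express the pressure in terms of $\mathbf{u}$ via the divergence of the momentum equation, reduce everything to estimates for products of spatial derivatives of $\mathbf{u}$, and then invoke Theorem~\ref{thm1} (together with the standing assumption $\lambda_0(\alpha)<\infty$, which controls the zeroth--order norm $\|\mathbf{u}(\cdot,t)\|_{L^2}$).

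The first step is to take the divergence of~\eqref{uu_equation}. Because $\nabla\cdot\mathbf{u}=0$, the $\mathbf{u}_t$, viscous, and curl contributions all vanish, leaving the Poisson equation $-\Delta p=\sum_{i,j}\partial_i\partial_j(u_iu_j)$. Taking the Fourier transform yields $\widehat{D^m p}(\xi)=\sum_{i,j}(\xi_i\xi_j/|\xi|^2)\widehat{D^m(u_iu_j)}(\xi)$, so that Plancherel gives the $L^2$ Calder\'on--Zygmund bound
\[
\|D^m p(\cdot,t)\|_{L^2} \,\leq\, C_m \sum_{i,j}\|D^m(u_iu_j)(\cdot,t)\|_{L^2}.
\]

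The second step expands each product $u_iu_j$ by Leibniz and controls the resulting sum of $\|D^k u\cdot D^{m-k} u\|_{L^2}$, $0\leq k\leq m$, via H\"older and the three--dimensional Gagliardo--Nirenberg inequality $\|g\|_{L^\infty}\leq C\|g\|_{L^2}^{1/4}\|D^2 g\|_{L^2}^{3/4}$ applied to $g=D^k u$:
\[
\|D^k u\cdot D^{m-k} u\|_{L^2} \,\leq\, C\,\|D^k u\|_{L^2}^{1/4}\,\|D^{k+2} u\|_{L^2}^{3/4}\,\|D^{m-k} u\|_{L^2}.
\]
Inserting Theorem~\ref{thm1}, namely $\|D^j u(\cdot,t)\|_{L^2}\lesssim K_{\alpha,j}\,\nu^{-j/2}\,\lambda_0(\alpha)\,t^{-(\alpha+j/2)}$ (and using $\lambda_0(\alpha)<\infty$ at $j=0$), the three factors collapse the time weights to
\[
t^{-\frac14(\alpha+\frac{k}{2})}\cdot t^{-\frac34(\alpha+\frac{k+2}{2})}\cdot t^{-(\alpha+\frac{m-k}{2})} \,=\, t^{-2\alpha-\frac{m}{2}-\frac34},
\]
with the $k$--dependence cancelling exactly. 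The matching powers of $\nu$ and $\lambda_0(\alpha)$ combine to $\nu^{-(m/2+3/4)}$ and $\lambda_0(\alpha)^{1/4+3/4+1}=\lambda_0(\alpha)^2$, as required. Summing the finitely many terms in $k,i,j$ absorbs everything into a single $C_m$, and taking $\limsup_{t\to\infty}t^{2\alpha+m/2+3/4}$ yields the corollary.

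I do not expect any real obstacle here: the genuine content sits in Theorem~\ref{thm1}, and Calder\'on--Zygmund, Leibniz and Gagliardo--Nirenberg are off--the--shelf. The one step that does need to be verified by hand is the bookkeeping of exponents -- the weight $3/4$ on $\|D^{k+2}\mathbf{u}\|_{L^2}$ together with $1/4$ on $\|D^k\mathbf{u}\|_{L^2}$ and $1$ on $\|D^{m-k}\mathbf{u}\|_{L^2}$ is precisely what makes the $k$--dependence drop out and produces the target rate $t^{-2\alpha-m/2-3/4}$, and the same cancellation is what delivers the stated $\nu^{-(m/2+3/4)}$ dependence on the viscosity.
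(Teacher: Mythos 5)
Your proof is correct and is precisely the argument the paper implicitly invokes by saying ``by the Calder\'on--Zygmund theory, one directly obtains\ldots''; the paper itself gives no written proof for this corollary, so your filling--in of the pressure--Poisson equation, the Riesz--transform $L^2$ bound, the Leibniz expansion and the Agmon--type inequality $\|g\|_{L^\infty}\lesssim\|g\|_{L^2}^{1/4}\|D^2g\|_{L^2}^{3/4}$ is exactly the intended route, and the exponent bookkeeping (the $k$--dependence cancels, the time weight collapses to $t^{-2\alpha-m/2-3/4}$, the viscosity power to $\nu^{-(m/2+3/4)}$, and $\lambda_0(\alpha)^{1/4+3/4+1}=\lambda_0(\alpha)^2$) checks out. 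The only small remark is that Theorem~\ref{thm1} is a $\limsup$ statement valid for $m\geq 1$, so at the endpoints $k=0$ and $k=m$ you correctly fall back on the definition of $\lambda_0(\alpha)$ for the zeroth--order factor, and the product bounds should be read as holding for $t$ sufficiently large with an arbitrarily small multiplicative $\epsilon$, which then disappears in the $\limsup$ --- exactly as in the proofs of Theorems~\ref{thm1} and~\ref{thm2} themselves.
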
 

Our main result gives asymptotic inequalities for high order derivatives of arbitrary weak solutions for systems \eqref{micropolar} and \eqref{eqn:linear-system}, so describing their long time dynamics. To this end, we choose $t_0 > 0$, 
sufficiently large to assure that $\bar{\zz}$ is 
smooth for $t \geq t_0$. Existence of such $t_0$ is assured by Theorem \ref{regularity_thm}. 
Then, consider the solution $\bar{\zz}$ of the linear system \eqref{eqn:linear-system} with initial data $ \bar{\zz}(\cdot,t_0) := \zz(\cdot,t_0) $. We denote such solution by $\bar{\zz} (\cdot , t ; t_0) = ( \bar{\uu} (\cdot , t ; t_0) , \bar{\ww} (\cdot , t ; t_0))$ and, to keep the notation simple, write $\Ez(\cdot,t;t_0) := \zz ( \cdot , t) -  \bar{\zz} (\cdot , t ; t_0)$ and 
$\Ew(\cdot,t;t_0):=\ww(\cdot,t) - \bar{\ww}(\cdot,t;t_0)$. Our main result is the following:
\begin{thm}\label{thm2}
	Let $ \zz_0 \in {\bf L}^2_\sigma(\RR^3) \times {\bf L}^2(\RR^3)$ and $ \zz(\cdot,t) $ be any weak solution to \eqref{micropolar}. If $ 0 \leq\alpha \leq \frac{5}{4} $, then
	\begin{subequations}\label{error_inequality} 
		\begin{equation}\label{error_inequality_z}
		\limsup_{t \to \infty} t^{\alpha + \beta + \frac{m}{2}} \norm{D^m \Ez(\cdot,t;t_0)}_\LT \leq C(\nu,\chi,\alpha,m), \quad 
		\forall t_0 \geq 0,
		\end{equation}
		and, for the micro-rotational field,
		\begin{equation}\label{error_inequality_w}
		\limsup_{t \to \infty} t^{\alpha + \beta + \frac{m+1}{2}} \norm{D^m \Ew(\cdot,t;t_0)}_\LT \leq \frac{1}{2} C(\nu,\chi,\alpha,m+1), \quad \forall t_0 \geq 0,	
		\end{equation}	
		where
		\[
		\beta =
		\begin{cases}
		\alpha + 1/4, & 0 \leq \alpha < 1/2 , \\
		1/4, & 1/2 \leq \alpha < 1  , \\
		5/4 - \alpha, & 1 \leq \alpha < 5/4 ,
		\end{cases}
		\]
		\[
		C(\nu,\chi,\alpha,m) =
		\begin{cases}
		(\nu^{-(\frac{m}{2} + \frac{5}{4})} \,\tilde{K}_{\alpha,m} + c^{-(\frac{m}{2} + \frac{5}{4})}\, 2^{\alpha+\beta+m/2})\,\lambda_0(\alpha)^2,		& 0 \leq \alpha < 1/2, \\
		\nu^{-(\frac{m}{2} + \frac{5}{4})}\,\tilde{K}_{\alpha,m}\,\lambda_0(\alpha)^2 + c^{-(\frac{m}{2} + \frac{5}{4})}\, \norm{\zz_0} 2^{\alpha+\beta+m/2}\,\lambda_0(\alpha),	& 
		1/2 \leq \alpha < 1, \\
		\nu^{-(\frac{m}{2} + \frac{5}{4})}\,\tilde{K}_{\alpha,m}\,\lambda_0(\alpha)^2 + c^{-(\frac{m}{2} + \frac{5}{4})}\, \norm{\zz_0} 2^{\alpha+\beta+m/2},		& 1 \leq \alpha < 5/4,
		\end{cases}
		\]
		with
		\begin{equation*}
		\tilde{K}_{\alpha,m} = \sum_{l=0}^{m} K_{\alpha,l}^{1/4} K_{\alpha,l+1}^{3/4} K_{\alpha,m-l+1}^{1/4} K_{\alpha,m-l+2}^{3/4},
		\end{equation*}	
		\[
		K_{\alpha,m} =  \min_{\delta>0} \Bigg\{ \delta^{-1/2} \prod_{j=0}^{m} (\alpha + \frac{j}{2} +\delta)^{1/2}  \Bigg\}
		\]
		and $c$ is a positive constant given by Lemma \ref{lema-linear_opertaror} below.
	\end{subequations}
\end{thm}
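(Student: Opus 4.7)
\textbf{Proof plan for Theorem \ref{thm2}.} Subtracting \eqref{eqn:linear-system} from \eqref{micropolar}, the error $\Ez=(\Eu,\Ew)$ satisfies the linear system \eqref{eqn:linear-system} with forcing given by the convective terms $-\PP(\uu\cdot\nabla\uu)$ and $-\uu\cdot\nabla\ww$, together with vanishing initial data at $t_0$. Writing $\mathcal{T}(\tau)$ for the matrix semigroup associated with \eqref{eqn:linear-system} (whose $L^{1}$--$L^{2}$ smoothing, with constant $c$, is provided by Lemma \ref{lema-linear_opertaror}), Duhamel's formula gives
\[
D^m\Ez(\cdot,t;t_0) \;=\; -\int_{t_0}^{t} D^m\mathcal{T}(t-s)\bigl(\PP(\uu\cdot\nabla\uu),\;\uu\cdot\nabla\ww\bigr)(s)\,ds.
\]
The plan is to split this integral at $s=t/2$ into an early part $I_1$ on $(t_0,t/2)$ and a late part $I_2$ on $(t/2,t)$, and to bound each by a different strategy adapted to the range of $\alpha$.

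For $I_2$ (where $s\sim t$), I would commute $D^m$ through $\mathcal{T}$ and use the $L^{2}$--$L^{2}$ boundedness of the linear semigroup, reducing the task to a pointwise bound on $\|D^m N(s)\|_{\LT}$ where $N$ denotes the nonlinear forcing. Expanding $D^m(\uu\cdot\nabla\uu)$ with Leibniz and applying the three-dimensional Gagliardo--Nirenberg inequality $\|f\|_{L^4(\RR^3)}\lesssim \|f\|_{\LT}^{1/4}\|\nabla f\|_{\LT}^{3/4}$ together with H\"older yields
\[
\|D^m(\uu\cdot\nabla\uu)(s)\|_{\LT} \;\lesssim\; \sum_{l=0}^{m}\|D^l\uu\|_{\LT}^{1/4}\|D^{l+1}\uu\|_{\LT}^{3/4}\|D^{m-l+1}\uu\|_{\LT}^{1/4}\|D^{m-l+2}\uu\|_{\LT}^{3/4}.
\]
Inserting the decay bounds of Theorem \ref{thm1} into each factor produces precisely $\tilde K_{\alpha,m}\,\nu^{-(m/2+5/4)}\lambda_0(\alpha)^{2}\,s^{-2\alpha-m/2-5/4}$, and integrating this over $(t/2,t)$ yields the $\nu^{-(m/2+5/4)}\tilde K_{\alpha,m}\lambda_0(\alpha)^{2}$ contribution to $C(\nu,\chi,\alpha,m)$; the resulting factor $t^{\beta-\alpha-1/4}$ remains bounded throughout $0\leq\alpha<5/4$.

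For $I_1$ (where $t-s\sim t$) I would instead retain $D^m$ on the semigroup and exploit $\nabla\cdot\uu=0$ to write $\uu\cdot\nabla\uu=\nabla\cdot(\uu\otimes\uu)$, transferring an extra derivative to $\mathcal{T}$ and then invoking the smoothing bound $\|D^{m+1}\mathcal{T}(\tau)F\|_{\LT}\lesssim (c\tau)^{-(m+1)/2-3/4}\|F\|_{L^1}$ from Lemma \ref{lema-linear_opertaror}. Since $t-s\geq t/2$ on $(t_0,t/2)$, the analysis reduces to controlling $\int_{t_0}^{t/2}\|\uu(s)\|_{\LT}^{2}\,ds$, which is estimated in three different ways: for $0\leq\alpha<1/2$ one uses $\|\uu(s)\|_{\LT}^{2}\lesssim \lambda_0(\alpha)^{2}s^{-2\alpha}$ and the integral grows like $t^{1-2\alpha}$; for $1/2\leq\alpha<1$ one interpolates $\|\uu\|_{\LT}^{2}\leq \|\zz_0\|\,\|\uu\|_{\LT}\lesssim \|\zz_0\|\lambda_0(\alpha)s^{-\alpha}$ and the integral grows like $t^{1-\alpha}$; for $1\leq\alpha<5/4$ the integral converges and is bounded using $\|\uu\|_{\LT}\leq\|\zz_0\|$. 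Multiplying by the smoothing prefactor $t^{-(m+1)/2-3/4}$ and then by $t^{\alpha+\beta+m/2}$, these three regimes force exactly $\beta=\alpha+1/4$, $\beta=1/4$, and $\beta=5/4-\alpha$ respectively, and reproduce the three stated expressions for $C(\nu,\chi,\alpha,m)$.

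The estimate \eqref{error_inequality_w} for the micro-rotational component is obtained by repeating the argument above, but using the sharper decay \eqref{inequality_w} in place of \eqref{inequality_z} whenever $\ww$ appears as a factor; the additional $t^{-1/2}$ of decay for $\ww$ (coming from the linear damping $-2\chi\ww$) shifts the index by one and produces both the overall factor $1/2$ and the replacement $m\mapsto m+1$ in the constant. The main difficulty will be the case analysis in $\alpha$: the three distinct balancings of the non-integrable, barely integrable, and integrable behavior of $\|\uu(s)\|_{\LT}^{2}$ against the fixed smoothing rate of $\mathcal{T}$ must be carried out carefully so that the exponent $\beta$ obtained in each regime is maximal, and the Leibniz--Gagliardo--Nirenberg bookkeeping must be tracked with enough precision to recover the exact combinatorial structure of $\tilde K_{\alpha,m}$.
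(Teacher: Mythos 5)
Your plan for inequality \eqref{error_inequality_z} matches the paper essentially step for step: the Duhamel representation of $\Ez$ in terms of the semigroup $e^{\mathbb{A}(t-s)}$ applied to $\uu\cdot\nabla\uu$ and $\uu\cdot\nabla\ww$, the split of the integral at $s=t/2$, the treatment of the late piece by Leibniz, the Gagliardo--Nirenberg $L^4$ bound $\|f\|_{L^4}\lesssim\|f\|_{L^2}^{1/4}\|Df\|_{L^2}^{3/4}$ and Theorem~\ref{thm1} (giving $\nu^{-(m/2+5/4)}\tilde K_{\alpha,m}\lambda_0(\alpha)^2$), and the treatment of the early piece by the divergence form $\uu\cdot\nabla\uu=\nabla\cdot(\uu\otimes\uu)$ and the $L^1$--$L^2$ smoothing $\|D^{m+1}e^{c\Delta\tau}F\|_{L^2}\lesssim (c\tau)^{-(m+1)/2-3/4}\|F\|_{L^1}$, with the three regimes of $\alpha$ producing the three choices of $\beta$ and the three forms of $C(\nu,\chi,\alpha,m)$.

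However, your sketch of \eqref{error_inequality_w} does not reflect how the bound actually arises, and as written it would not close. Simply ``using the sharper decay \eqref{inequality_w} in place of \eqref{inequality_z} whenever $\ww$ appears as a factor'' cannot improve the bound for $\Ez$, because the dominant contribution to \eqref{error_inequality_z} is the $\uu\cdot\nabla\uu$ term, which contains no factor of $\ww$ at all; any gain in the $\uu\cdot\nabla\ww$ contribution is swamped. The paper instead runs a \emph{second, separate} Duhamel representation on the $\ww$-component of the error, writing $\bar\WW:=e^{2\chi(t-t_0)}\bar\ww$ to extract the exponential factor $e^{-2\chi(t-s)}$ explicitly from the Lam\'e semigroup, and treating the coupling $\chi\nabla\times\Eu$ as a forcing. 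After discarding the terms that vanish in the limit, the surviving term is
\[
\chi\, t^{\alpha+\beta+\frac{m+1}{2}}\int_{t_0}^{t} e^{-2\chi(t-s)}\bigl\|e^{c\Delta(t-s)}D^{m+1}\Eu(\cdot,s)\bigr\|_{L^2}\,ds,
\]
which is estimated by inserting the already-proven bound \eqref{error_inequality_z} at order $m+1$ (this is where $m\mapsto m+1$ comes from), splitting the integral at $\sigma t$, and sending $t\to\infty$, then $\sigma\to 1^-$. The limiting value is exactly $\chi\cdot\frac{1}{2\chi}=\frac12$ times $C(\nu,\chi,\alpha,m+1)$; the factor $\tfrac12$ is produced by $\chi\int_0^\infty e^{-2\chi\tau}\,d\tau$, not by any ``extra $t^{-1/2}$ shifting the index.'' You should also add the preliminary reduction, via Theorem~\ref{thm_linear_aprox}, from an arbitrary $t_0\geq 0$ to a regular $t_0>t_*$: this is what makes the claim ``$\forall t_0\geq 0$'' legitimate, since the Duhamel and smoothing estimates require $\zz(\cdot,t_0)$ to be regular enough.
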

\begin{remark}
Theorem \ref{thm2} actually holds for any initial time $ t_0 \geq 0 $, due to Theorem \ref{thm_linear_aprox} below. 
Moreover, the $ \dot{H}^m$ decay of the related errors depend only on $\limsup_{t \to \infty }\|\uu(t) \|=: \lambda_0(\alpha)$. As 
already mentioned, such inequality has not been observed in the literature even for the Navier-Stokes case. 
\end{remark}
\begin{remark}
It is interesting to note that $\lambda_{0}(\alpha)$ depends only on $\uu$. This is due to the fact that $\ww (\cdot,t)$ goes to zero faster than $\uu (\cdot,t)$ (see \eqref{zz_to_zero} and \eqref{ww_to_zero} below). We should also recall that $\lambda_{0}(0) = 0$ for any weak solution $(\uu,\ww)$ (see \cite{MR3906315} for instance) and that for $(\uu_0, \ww_0) \in L^1(\RR^3)\cap L^2_\sigma(\RR^3)\times L^1(\RR^3)\cap L^2(\RR^3)$ one has $\lambda_{0}(3/4)<\infty$ (see \cite{MR3825173}).
\end{remark}

There are many results on the existence and uniqueness of solutions for problems involving
the system (\ref{micropolar})$ - $ see, e.g., \cite{MR0204005,MR1711268,MR2646523,MR2860636,MR0467030,MR2158216}. For example, in \cite{MR0467030} Galdi and Rionero showed existence and uniqueness of weak solutions for the initial boundary-value problem for the micropolar system (in this case, $\Omega \subset \RR^3$ is a connected
open set that replaces the whole space $\RR^3$ in (\ref{micropolar}) such that the solution vanishes on  $\partial \Omega \times [0,T]$). For the same problem, Lukaszewicz \cite{MR1041744} proved existence and uniqueness of strong solutions in 1989, and, in \cite{MR1711268}, established global existence of weak solutions for arbitrary initial data  $ (\uu_0,\ww_0) \in {{\bf L_\sigma^2 \times L^2}} $. In \cite{MR1484679}, Rojas-Medar proved local existence and uniqueness of strong solutions. In \cite{MR1810322}, 
Ortega-Torres and
Rojas-Medar, assuming small initial data, proved global existence of a
strong solution. 
In \cite{MR2646523}, Boldrini, Durán and Rojas-Medar proved
existence and uniqueness of strong solutions in $L^p(\Omega)$, for $p > 3$. In \cite{MR3516831}, Braz e Silva, Friz and Rojas-Medar showed a exponential decay result for strong solutions in bounded domains on $ \RR^3 $. In \cite{MR3977513}, Cruz showed global existence and uniqueness of strong solutions for sufficiently small initial conditions.

Here, we will extensively use the following properties of the linear and micro-rotational velocities derived in Guterres {\it et al.} \cite{MR3906315}:
\begin{equation}\label{zz_to_zero}
	\lim_{t \to \infty} t^{\frac{s}{2}} \norm{(\uu,\ww)(\cdot,t)}_{\dot{H}^s(\RR^3)} = 0
\end{equation}
and
\begin{equation}\label{ww_to_zero}
	\lim_{t \to \infty} t^{\frac{s+1}{2}} \norm{\ww(\cdot,t)}_{\dot{H}^s(\RR^3)} = 0,
\end{equation}
for each $s \geq 0$.

It is worth to mention that the techniques in the proofs of Theorems \ref{thm1} and \ref{thm2} can be used to obtain similar results for other dissipative systems with non linear terms of the type $\nabla\cdot(\uu\otimes\uu)$. We briefly discuss how to carry on 
the argument to some other models in section \ref{other_systems}. 

\section{Auxiliary Results} 
In order to prove our theorems, we state some basic facts and results. 
Some of them have not been observed before and might even be of independent interest as, for example, Theorem \ref{thm_linear_aprox}. 
\subsection{Regularity time $ t^* $ and monotonicity of high order derivatives}
 Here, we adapt for the asymmetric fluids equations some results 
 on the eventual smoothness of solutions for the Navier-Stokes equations recently obtained in \cite{MR3907942}. 
 
 The classical regularity result for weak solutions due to Leray \cite{MR1555394}  is the following: If $\uu$ is a weak solution
 of
  \begin{eqnarray*}
	\mbox{\boldmath $u$}_t
	+
	\mbox{\boldmath $u$} \cdot \nabla \mbox{\boldmath $u$}
	+
	\nabla \:\!{ \sf{p}}
	& = &
	\mu  \, \Delta \mbox{\boldmath $u$}, \\ 
		\nabla \cdot \mbox{\boldmath $u$}(\cdot,t)   & =  & 0, \\ 
	 	\uu (\cdot , 0 ) & = & \uu_0 \in {\bf L}^2_\sigma(\RR^3) ,
\end{eqnarray*}
  then there exists $t^*>0$ such that $ \uu(\cdot,t) \in C^\infty (\RR^3 \times (t^*,\infty))$  and 
 \begin{equation}\label{eqn:regularity_property}
 \uu(\cdot,t) \in C^0(\,(t^*, \infty), H^m(\RR^n)\,), \,\,\,\forall m\geq0 .
 \end{equation} 
 Moreover, the regularity time $ t^* \geq 0 $ satisfies 
\begin{equation*} 
t^* < K\, \nu^{-5}\,\|\zz_0\|^4_{L^2(\RR^3)} ,\,\, \text{ with} \,\,\,  K \leq \frac{1}{128\pi} <0.000791572 .
\end{equation*}
 Using  the {\it epochs of regularity property} described in \cite{MR943824,MR1555394}, 
 one can adapt the same argument from \cite{MR3907942} to obtain the following result for the asymmetric case.
\begin{thm}\label{regularity_thm}
	If $ \zz(\cdot,t) $ is a weak solution for problem \eqref{micropolar},  then there exists $ t^{**} \geq 0$ such that 
\begin{equation}\label{eqn:regularity_property_improved}
t^* \leq t^{** }\leq 0.000 464 504 284 \,\, \nu^{-5}\,\|\zz_0\|^4_{L^2(\RR^3)} .
\end{equation}  
\end{thm}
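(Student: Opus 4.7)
The plan is to transplant, line by line, the argument of \cite{MR3907942} developed for Navier--Stokes, exploiting the fact that the extra terms in \eqref{micropolar} relative to Navier--Stokes are linear and sign-favorable. The first step is the basic energy identity: testing \eqref{uu_equation} against $\uu$ and \eqref{ww_equation} against $\ww$, using $\nabla\cdot\uu=0$ and $\int \uu\cdot(\nabla\times\ww)\,dx = \int \ww\cdot(\nabla\times\uu)\,dx$, the coupling terms combine with the $-2\chi\ww$ damping into the coercive form
\[
\tfrac{1}{2}\tfrac{d}{dt}\|\zz(\cdot,t)\|^2_\LT + \mu\|\nabla\uu\|^2 + \gamma\|\nabla\ww\|^2 + \kappa\|\nabla\cdot\ww\|^2 + \chi\|\nabla\times\uu - \ww\|^2 \leq 0,
\]
whence $\int_{0}^{\infty}\|\nabla\zz(\cdot,t)\|^2\,dt \leq \|\zz_0\|^2/(2\nu)$. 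Leray's \emph{epochs of regularity} argument then applies verbatim to \eqref{micropolar} (local strong existence in $H^1$ for the coupled system is standard, cf.\ \cite{MR1484679}), producing a dense set of times $t_1$ at which $\zz(\cdot,t_1)\in H^1$ and a classical solution exists on a neighborhood of $t_1$.

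Next I would derive an $H^1$ continuation criterion. Differentiating $\|\nabla\zz\|^2$ and using the equations, the cross terms $\chi\int\nabla(\nabla\times\ww)\cdot\nabla\uu$ and $\chi\int\nabla(\nabla\times\uu)\cdot\nabla\ww$ cancel by antisymmetry (or are absorbed by a fraction of $\nu\|\Delta\zz\|^2$), while $-2\chi\|\nabla\ww\|^2$ is sign-good; the only harmful contributions come from $\int(\uu\cdot\nabla)\uu\cdot\Delta\uu$ and $\int(\uu\cdot\nabla)\ww\cdot\Delta\ww$. Applying the three-dimensional Gagliardo--Nirenberg inequality and Young's inequality yields
\[
\tfrac{d}{dt}\|\nabla\zz\|^2 + \nu\|\Delta\zz\|^2 \leq \tfrac{C}{\nu^3}\|\zz\|^2\|\nabla\zz\|^4 .
\]
Hence, if at some $t_1$ the smallness condition $\|\zz(\cdot,t_1)\|^2\|\nabla\zz(\cdot,t_1)\|^2 < c_\ast\nu^4$ holds for the explicit threshold $c_\ast$ produced by the Young optimization, then $\|\nabla\zz(\cdot,t)\|$ is non-increasing for $t\geq t_1$ and (since $\|\zz(\cdot,t)\|$ is itself non-increasing by Step 1) the smallness is preserved, so the solution is classical on $[t_1,\infty)$.

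Finally, to locate $t^{**}$, one applies the mean value theorem to $\int_{0}^{T}\|\nabla\zz(\cdot,t)\|^2\,dt \leq \|\zz_0\|^2/(2\nu)$: for every $T>0$ there exists $t_1\in(0,T)$ with $\|\nabla\zz(\cdot,t_1)\|^2 \leq \|\zz_0\|^2/(2\nu T)$. Combined with $\|\zz(\cdot,t_1)\|\leq\|\zz_0\|$, the continuation criterion is met as soon as $\|\zz_0\|^4/(2\nu T) < c_\ast\nu^4$, i.e.\ $T > \|\zz_0\|^4/(2c_\ast\nu^5)$, which gives \eqref{eqn:regularity_property_improved}. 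The main technical obstacle is bookkeeping: to obtain the sharp numerical constant $0.000464504284$ one must carry the Gagliardo--Nirenberg inequality with its optimal constant, use Young's inequality with optimally chosen weights, and verify that none of the micropolar cross-terms spoil the delicate optimization, exactly mirroring the refinement of \cite{MR3907942} over Leray's bound $1/(128\pi)$. Once all coupling contributions are seen to vanish or be strictly dissipative, the numerical value follows by direct minimization.
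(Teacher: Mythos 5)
Your outline follows the same route the paper intends: the basic energy estimate, Leray's epochs of regularity, an $H^1$ continuation criterion from a differential inequality of the form $\tfrac{d}{dt}\|\nabla\zz\|^2 + \nu\|\Delta\zz\|^2 \lesssim \nu^{-3}\|\zz\|^2\|\nabla\zz\|^4$, and then the mean value theorem applied to $\int_0^\infty\|\nabla\zz\|^2\,dt$. The paper itself gives no details — it simply invokes the epochs-of-regularity property and says the argument of \cite{MR3907942} adapts — so your sketch is precisely the kind of filling-in the authors leave to the reader.

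One point deserves correction, however. The coupling terms in the $\dot H^1$ estimate do \emph{not} cancel by antisymmetry. Testing \eqref{uu_equation} against $-\Delta\uu$ and \eqref{ww_equation} against $-\Delta\ww$ produces the terms $\chi\int(\nabla\times\ww)\cdot\Delta\uu\,dx$ and $\chi\int(\nabla\times\uu)\cdot\Delta\ww\,dx$, and integration by parts shows
\[
\int(\nabla\times\uu)\cdot\Delta\ww\,dx \;=\; \int\Delta\uu\cdot(\nabla\times\ww)\,dx ,
\]
so the two contributions \emph{add} to $2\chi\int\Delta\uu\cdot(\nabla\times\ww)\,dx$ rather than cancelling. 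What saves the estimate is the completed-square structure you allude to in your parenthetical: using $\|\nabla\ww\|^2 = \|\nabla\times\ww\|^2 + \|\nabla\cdot\ww\|^2$, one has
\[
(\mu+\chi)\|\Delta\uu\|^2 + 2\chi\|\nabla\ww\|^2 + 2\chi\!\int\!\Delta\uu\cdot(\nabla\times\ww)
= \mu\|\Delta\uu\|^2 + \chi\|\Delta\uu + \nabla\times\ww\|^2 + \chi\|\nabla\times\ww\|^2 + 2\chi\|\nabla\cdot\ww\|^2 ,
\]
which is nonnegative and leaves exactly $\mu\|\Delta\uu\|^2 + \gamma\|\Delta\ww\|^2 \geq \nu\|\Delta\zz\|^2$ available for the Navier--Stokes-type absorption. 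This matters because the absorption consumes the entire $\chi$-contribution to the $\uu$-viscosity: if you had relied on genuine cancellation you might have been tempted to keep $\mu + \chi$ as the effective viscosity and obtain a smaller bound than \eqref{eqn:regularity_property_improved}. With the correct bookkeeping, $\nu = \min\{\mu,\gamma\}$ is what survives, and the numerical constant from \cite{MR3907942} then does transfer unchanged. Your remaining caveat about carrying optimal Gagliardo--Nirenberg and Young constants is the right thing to say; that is indeed where the specific value $0.000464504284$ comes from in the cited reference.
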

This is a slight improvement of the estimate for the regularity time $ t^* $, which is relevant for numerical simulations.    

The following Sobolev type inequality is very useful to our ends.  
\begin{lemma}\label{lema-Sobolev-inequality}
	For all integers $ m \geq 1$, $ 0\leq\ell\leq m-1 $ and any $ f \in H^{m+1}(\RR^3) $, it holds
	\begin{equation}\label{eqn:Sobolev-inequality} 
	 \|D^\ell f \|_{L^\infty(\RR^3)} \| D^{m-\ell} f\|_{L^2(\RR^3)} \leq \|f\|^{1/2}_{L^2(\RR^3)}  \|D f \|^{1/2}_{L^2(\RR^3)} \|D^{m+1}f \|_{L^2(\RR^3)}.
	 \end{equation}      
\end{lemma}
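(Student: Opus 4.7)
The stated inequality is a Gagliardo--Nirenberg-type interpolation in $\mathbb{R}^{3}$: both sides rescale as $\lambda^{m-3/2}$ under $f(x)\mapsto f(\lambda x)$, so scaling alone strongly restricts the admissible interpolations. The plan is to prove it in two stages: first a homogeneous 3D Agmon bound on $\|D^{\ell}f\|_{L^{\infty}}^{2}$, and then a three-parameter Gagliardo--Nirenberg interpolation that re-expresses the remaining $L^{2}$ factors in terms of $\|f\|$, $\|Df\|$, $\|D^{m+1}f\|$.

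For the Agmon step, I would use that in $\mathbb{R}^{3}$
\[
\|g\|_{L^{\infty}(\mathbb{R}^{3})}^{2}\;\leq\;C\,\|Dg\|_{L^{2}}\,\|D^{2}g\|_{L^{2}},
\]
which follows from $\|g\|_{L^{\infty}}\leq(2\pi)^{-3/2}\int|\widehat g|\,d\xi$, a Cauchy--Schwarz split at $|\xi|=R$ using the weights $|\xi|^{-2}$ on $\{|\xi|\leq R\}$ and $|\xi|^{-4}$ on $\{|\xi|>R\}$ (each integrable in 3D), and optimization over $R$. Applied to $g=D^{\ell}f$ this produces
\[
\|D^{\ell}f\|_{L^{\infty}}^{2}\;\leq\;C\,\|D^{\ell+1}f\|_{L^{2}}\,\|D^{\ell+2}f\|_{L^{2}},
\]
and the hypothesis $\ell\leq m-1$ ensures $\ell+2\leq m+1$, keeping all derivatives within the available regularity.

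Squaring the target and substituting this bound reduces \eqref{eqn:Sobolev-inequality} to showing
\[
\|D^{\ell+1}f\|\,\|D^{\ell+2}f\|\,\|D^{m-\ell}f\|^{2}\;\leq\;C\,\|f\|\,\|Df\|\,\|D^{m+1}f\|^{2}.
\]
Each factor on the left is controlled by the three-parameter Gagliardo--Nirenberg inequality
\[
\|D^{k}f\|_{L^{2}}\;\leq\;\|f\|^{a}\,\|Df\|^{b}\,\|D^{m+1}f\|^{c},\qquad a+b+c=1,\ b+c(m+1)=k,\ a,b,c\geq 0,
\]
which is easy to derive by Hölder's inequality on the Fourier side. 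Since each $\|D^{k}f\|$ admits a one-parameter family of such bounds, the main work is to pick the free parameters so that the combined exponents on $\|f\|,\|Df\|,\|D^{m+1}f\|$ sum to exactly $(1,1,2)$. A valid choice is
\[
(a_{1},b_{1},c_{1})=\bigl(\tfrac{m-\ell}{m+1},\,0,\,\tfrac{\ell+1}{m+1}\bigr)\ \text{for }\|D^{\ell+1}f\|,
\]
\[
(a_{2},b_{2},c_{2})=\bigl(0,\,\tfrac{m-\ell-1}{m},\,\tfrac{\ell+1}{m}\bigr)\ \text{for }\|D^{\ell+2}f\|,
\]
\[
(a_{3},b_{3},c_{3})=\Bigl(\tfrac{\ell+1}{2(m+1)},\,\tfrac{\ell+1}{2m},\,1-\tfrac{(\ell+1)(2m+1)}{2m(m+1)}\Bigr)\ \text{for }\|D^{m-\ell}f\|,
\]
for which a direct algebraic check gives $a_{1}+a_{2}+2a_{3}=1$, $b_{1}+b_{2}+2b_{3}=1$, $c_{1}+c_{2}+2c_{3}=2$ and confirms non-negativity of every entry.

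The main obstacle is precisely this bookkeeping: many scaling-correct interpolations coexist and only the combination above lands on $\|f\|^{1/2}\|Df\|^{1/2}\|D^{m+1}f\|$ exactly. The hypothesis $\ell\leq m-1$ plays a double role, appearing both in the Agmon step (to keep $\ell+2\leq m+1$) and in the Gagliardo--Nirenberg step (to ensure $b_{2}=(m-\ell-1)/m\geq 0$).
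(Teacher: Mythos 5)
Your argument follows the same route as the paper's own proof: an Agmon-type $L^\infty$ estimate on $D^\ell f$, followed by Gagliardo--Nirenberg interpolation on the remaining $L^2$ factors, all reduced to Cauchy--Schwarz and H\"older on the Fourier side. What you add is the explicit exponent bookkeeping: the paper's proof cites its two ingredients (the interpolation $\|D^\ell f\|\leq\|f\|^\theta\|D^kf\|^{1-\theta}$ and the Agmon bound $\|f\|_{L^\infty}\leq\|f\|^{1/4}\|D^2f\|^{3/4}$) and then asserts that ``combining [them] one directly obtains the desired bound,'' but the combination is not automatic --- several scaling-admissible assignments of interpolation exponents produce negative entries when $\ell$ is small relative to $m$ --- and the three triples $(a_i,b_i,c_i)$ you write down do satisfy all the linear and non-negativity constraints, with $b_2\geq 0$ using precisely $\ell\leq m-1$ and $c_3\geq 0$ following from $\ell+1\leq m$. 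The one discrepancy to repair: the lemma is stated with no multiplicative constant, while your Agmon normalization $\|g\|_{L^\infty}^2\leq C\,\|Dg\|\,\|D^2g\|$, and hence your conclusion, carry a $C$. Both the paper's Agmon form and yours do hold in $\mathbb{R}^3$ with constant at most one (your Fourier split at $|\xi|=R$, optimized at $R=\|D^2g\|/\|Dg\|$, gives a constant well below $1$), so the constant-free inequality follows --- but you should note this explicitly, since the statement you are proving has no $C$ on the right-hand side.
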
 
\begin{proof}
	Through Fourier transformation, one has
	\begin{equation}\label{eqn:sobolev1}
	\|D^\ell f \|_{L^2(\RR^3)} \leq \| f \|^\theta_{L^2(\RR^3)} \|D^k f \|^{1-\theta}_{L^2(\RR^3)}, \,\,\, \theta = \ell/k,
	\end{equation}  
which is actually valid for any dimension $ n \in \mathbb{N} $. Moreover, 
		\begin{equation}\label{eqn:sobolev2}
	\| f \|_{L^\infty(\RR^3)} \leq \| f \|^{1/4}_{L^2(\RR^3)} \|D^2 f \|^{3/4}_{L^2(\RR^3)}.
	\end{equation}  
	Combining the well-known inequalities \eqref{eqn:sobolev1} and \eqref{eqn:sobolev2}, 
	one directly obtains the desired bound \eqref{eqn:Sobolev-inequality}. 
\end{proof}	
 One uses inequality \eqref{eqn:Sobolev-inequality} to easily adapt the proof of a result in \cite{MR3907942} to the case of asymmetric fluids. 
 \begin{thm}
 	Let $ \zz(\cdot,t) $ be a weak solution for problem \eqref{micropolar}.  There exists $ t^{**}_m \geq t^*$ such that $ \|D^m \zz(\cdot,t) \|_{L^2(\RR^3)} $  is monotonically decreasing everywhere in the interval $ (t^{**}_m, \infty) $. Morover, there exists a positive constant $ K_m $, depending only on $m$, such that
 	\begin{equation}\label{eqn:regularity_property_improved-m}
 	t^{** }_m \leq K_m \,\, \nu^{-5}\,\|\zz_0\|^4_{L^2(\RR^3)} .
 	\end{equation}
 \end{thm}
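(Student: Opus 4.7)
The plan is to adapt the Navier--Stokes argument of \cite{MR3907942} to the coupled micropolar system, using Lemma \ref{lema-Sobolev-inequality} to absorb Leibniz commutators at arbitrary order $m$ uniformly. For $t>t^{**}$ provided by Theorem \ref{regularity_thm}, the weak solution $\zz(\cdot,t)$ is smooth and lies in $H^k$ for every $k\geq 0$, so every manipulation below is legitimate. Applying $D^m$ to \eqref{uu_equation} and \eqref{ww_equation}, taking $L^2$ inner products against $D^m\uu$ and $D^m\ww$ respectively, and summing, the pressure term drops (because $\nabla\cdot D^m\uu=0$) and the transport pieces $\int (\uu\cdot\nabla D^m\uu)\cdot D^m\uu\,dx$ and $\int(\uu\cdot\nabla D^m\ww)\cdot D^m\ww\,dx$ vanish by incompressibility, leaving an identity of the form
\[
\tfrac{1}{2}\tfrac{d}{dt}\|D^m\zz\|_{L^2}^2 + (\mu+\chi)\|D^{m+1}\uu\|_{L^2}^2 + \gamma\|D^{m+1}\ww\|_{L^2}^2 + 2\chi\|D^m\ww\|_{L^2}^2 \leq I_m + J_m,
\]
where I have dropped the non-negative term $\kappa\|\nabla\cdot D^m\ww\|^2$, $I_m := 2\chi\int(\nabla\times D^m\uu)\cdot D^m\ww\,dx$ is the curl coupling, and $J_m$ collects the Leibniz commutators coming from $D^m(\uu\cdot\nabla\uu)$ and $D^m(\uu\cdot\nabla\ww)$.

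Young's inequality gives $|I_m|\leq \chi\|D^{m+1}\uu\|_{L^2}^2+\chi\|D^m\ww\|_{L^2}^2$, absorbed on the left-hand side to leave the clean dissipation $\mu\|D^{m+1}\uu\|^2+\gamma\|D^{m+1}\ww\|^2+\chi\|D^m\ww\|^2$. For $J_m$ I first rewrite $\uu\cdot\nabla\uu=\nabla\cdot(\uu\otimes\uu)$ and $\uu\cdot\nabla\ww=\nabla\cdot(\uu\otimes\ww)$ using $\nabla\cdot\uu=0$, integrate by parts so an extra derivative lands on $D^m\uu$ (respectively $D^m\ww$), expand with Leibniz, and estimate each summand via H\"older:
\[
\Bigl|\int D^\ell\uu\otimes D^{m-\ell}\uu : D^{m+1}\uu \,dx\Bigr|\leq \|D^\ell\uu\|_{L^\infty}\|D^{m-\ell}\uu\|_{L^2}\|D^{m+1}\uu\|_{L^2},
\]
where by symmetry of the tensor product I may assume $0\leq \ell\leq m-1$. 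Lemma \ref{lema-Sobolev-inequality} then bounds each factor, and a single Young step handles the mixed $\uu,\ww$ products by reducing them to the single-function form of the lemma, yielding
\[
|J_m|\leq C_m\bigl(\|\zz\|_{L^2}^{1/2}\|D\zz\|_{L^2}^{1/2}\bigr)\bigl(\|D^{m+1}\uu\|_{L^2}^2+\|D^{m+1}\ww\|_{L^2}^2\bigr).
\]

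Combining everything, with $\nu=\min\{\mu,\gamma\}$,
\[
\frac{d}{dt}\|D^m\zz\|_{L^2}^2 + 2\bigl(\nu - C_m\|\zz\|_{L^2}^{1/2}\|D\zz\|_{L^2}^{1/2}\bigr)\|D^{m+1}\zz\|_{L^2}^2 \leq 0,
\]
so $\|D^m\zz(\cdot,t)\|_{L^2}$ is monotonically decreasing on any interval where the bracketed coefficient is non-negative. Since $\|\zz(\cdot,t)\|_{L^2}$ is non-increasing and tends to zero by Kato--Masuda (cf.\ \cite{MR3906315}), and $\|D\zz(\cdot,t)\|_{L^2}$ is bounded past $t^{**}$ by Theorem \ref{regularity_thm}, the bracketed factor is eventually positive and a finite threshold $t^{**}_m\geq t^*$ does exist. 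The main obstacle is promoting this qualitative ``eventually monotone'' statement to the quantitative bound $t^{**}_m\leq K_m\,\nu^{-5}\|\zz_0\|_{L^2}^4$: the explicit choice of $t^{**}_m$ must be extracted by chaining the dissipation identity $\int_0^\infty\|D\zz\|^2\,ds\leq \|\zz_0\|_{L^2}^2/(2\nu)$, the bound \eqref{eqn:regularity_property_improved} on $t^{**}$, and an induction on $m$ (using the monotonicity already proved at lower orders) so that the final scaling in $\nu$ and $\|\zz_0\|_{L^2}$ matches that of the basic regularity time.
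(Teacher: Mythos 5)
Your derivation of the differential inequality
\[
\frac{d}{dt}\|D^m\zz\|_{L^2}^2 + 2\bigl(\nu - C_m\|\zz\|_{L^2}^{1/2}\|D\zz\|_{L^2}^{1/2}\bigr)\|D^{m+1}\zz\|_{L^2}^2 \leq 0
\]
is sound and is precisely the adaptation of \cite{MR3907942} that the paper has in mind: the pressure and transport terms vanish by incompressibility, Young absorbs the curl coupling into the coefficients $\mu+\chi-\chi=\mu$, $2\chi-\chi=\chi$, and Lemma~\ref{lema-Sobolev-inequality} reduces every Leibniz commutator to the single factor $\|\zz\|^{1/2}\|D\zz\|^{1/2}\|D^{m+1}\zz\|^2$. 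This matches the same $J_m$ estimate the paper runs inside the proof of Theorem~\ref{thm1} (compare \eqref{sup_e6}--\eqref{sup_e7}), so the qualitative existence of $t^{**}_m$ is fully proved.

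What you label as the ``main obstacle'' --- the explicit bound $t^{**}_m\leq K_m\nu^{-5}\|\zz_0\|^4$ --- is indeed the only part not carried out, and your sketch of it is correct and complete in its ingredients. To close it: from $\int_{t^*}^\infty\|D\zz\|^2\,ds\leq\|\zz_0\|^2/(2\nu)$ and a mean-value selection, there is a time $t_1$ with $t_1-t^*\lesssim_m\|\zz_0\|^4/\nu^5$ at which $\|D\zz(t_1)\|^2\leq\nu^4/(C_m^4\|\zz_0\|^2)$, so that $\|\zz(t_1)\|\,\|D\zz(t_1)\|\leq\nu^2/C_m^2$. The induction enters because you must know $\|D\zz\|$ is \emph{monotone} past some earlier threshold of the same size (the $m=1$ case of the theorem) to conclude the bracket stays nonnegative for all $t>t_1$ rather than merely at $t_1$. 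Combined with \eqref{eqn:regularity_property_improved} for the base time $t^{**}$, the scaling in $\nu$ and $\|\zz_0\|$ comes out exactly as $\nu^{-5}\|\zz_0\|^4$, as you predicted. Since the paper itself gives only a one-line pointer to \cite{MR3907942} rather than a written proof, your reconstruction is in fact more explicit than the source; I would only ask that you carry out the mean-value-plus-monotonicity step rather than leave it as a plan.
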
     
\subsection{Linear operators leading to asymptotic behavior} We observe that the linear system 
\eqref{eqn:linear-system} has a solution $ \bar{\zz}(\cdot,t;t_0) \in C^0([t_0,\infty), L^2(\RR^3)) $, for each initial time $ t_0 \geq 0 $, that is,  
$\bar{\zz}(\cdot,t;t_0) = e^{\mathbb{A} \:\!t}  \zz(\cdot,t_0) 
(\cdot)$, where 
$$
\mathbb{A}\zz:=\left[
\begin{array}{ccc}
(\mu+\chi)\Delta & \chi \;\!\nabla\wedge  \\
\chi \;\!\nabla\wedge &  \mathbb{L} - 2\chi\,\mbox{Id}_{3\times 3} 
\\
\end{array}
\right]\!\!\;\!
\left[
\begin{array}{ccc}
\uu \\
\ww  
\\
\end{array}
\right]\!\!\;\!,$$
and $ \mathbb{L\ww}:= (\mu+\chi)\Delta \ww+ \kappa \nabla(\nabla\cdot \ww)  $ is the Lamè operator. 
In the frequency space, after taking the Fourier transform of system (\ref{eqn:linear-system}), one obtains

\begin{displaymath}
\partial_t \widehat{\bar{\zz}} = M(\xi) \widehat{\bar{\zz}},
\end{displaymath}
where $M = M(\xi)$ is the matrix of symbols

\begin{equation}
\label{eqn:matrix-symbols}
M = \left( \begin{array}{ccc} - (\mu + \chi) |\xi|^2 Id_{3 \times 3} & i \chi R_3(\xi) 
   \\  i \chi    R_3 (\xi) & - ( \gamma |\xi|^2 + 2 \chi)  Id_{3 \times 3} -  \kappa\,\xi_i \xi_j  
\end{array} \right).
\end{equation}
Here, $Id_{3 \times 3}$ denotes the $3\times 3$ identity matrix and $ i  R_3 (\xi)$ denotes the rotation matrix
\begin{displaymath}
i  R_3 (\xi) =  i  \left( \begin{array}{ccc} 0 & \xi_3 & - \xi_2 \\ - \xi_3 & 0 & \xi_1 \\ \xi_2 & - \xi_1 & 0 \end{array} \right).
\end{displaymath}

In \cite{NichePerusato2020}, the authors proved the following estimate for the eigenvalues of $ M(\xi) $: 
	\begin{displaymath}
	\lambda _{max} (M) \leq - C |\xi|^2, \qquad C =C (\mu, \chi, \gamma) > 0 , 
	\end{displaymath} 
as long as $32 \chi (\mu + \chi + \gamma) > 1$. As a consequence, we immediately have the following upper bound for the semigroup $\bigl(e^{\mathbb{A}t}\bigr)_{t \geq 0}$ associated to \eqref{eqn:linear-system}.     
\begin{lemma}\label{lema-linear_opertaror}
	Let $ \mathcal{G} \in L^2_\sigma(\mathbb{R}^3) \times {L}^2(\mathbb{R}^3) $. If  $32 \chi (\mu + \chi + \gamma) > 1$, then \footnote{Recalling that $ e^{\Delta \tau} $ denotes the heat semigroup.}
	\begin{equation}\label{eqn:linear_operator_eq}
	\|e^{\mathbb{A}t} \:\!\mathcal{G} \|_{L^2} \leq \left\|e^{c\,\Delta t} \:\!\mathcal{G} \right\|_{L^2},
	\end{equation} 
	for all $t \geq 0$. 
\end{lemma}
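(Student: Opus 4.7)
The plan is to pass to the Fourier side, apply Plancherel, and reduce the semigroup estimate to a pointwise (in frequency) bound on the matrix exponential $e^{M(\xi)t}$. Writing
\[
\widehat{e^{\mathbb{A}t}\mathcal{G}}(\xi) \;=\; e^{M(\xi)t}\widehat{\mathcal{G}}(\xi),
\]
with $M(\xi)$ the symbol given in \eqref{eqn:matrix-symbols}, one has by Plancherel
\[
\|e^{\mathbb{A}t}\mathcal{G}\|_{L^2}^2 \;=\; \int_{\RR^3} \bigl|e^{M(\xi)t}\widehat{\mathcal{G}}(\xi)\bigr|^2\,d\xi,
\]
and analogously $\|e^{c\Delta t}\mathcal{G}\|_{L^2}^2 = \int_{\RR^3} e^{-2c|\xi|^2 t}|\widehat{\mathcal{G}}(\xi)|^2\,d\xi$. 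Therefore it suffices to prove the pointwise operator-norm bound
\[
\bigl\|e^{M(\xi)t}\bigr\| \;\leq\; e^{-c|\xi|^2 t}, \qquad \xi \in \RR^3,\ t\geq 0,
\]
with $c = C(\mu,\chi,\gamma)>0$ the constant from \cite{NichePerusato2020}.

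The key observation that makes this step work is that $M(\xi)$ is Hermitian. Indeed, the diagonal blocks $-(\mu+\chi)|\xi|^2 \,\mathrm{Id}$ and $-(\gamma|\xi|^2+2\chi)\,\mathrm{Id}-\kappa\,\xi_i\xi_j$ are real symmetric, while the off-diagonal block $iR_3(\xi)$ is Hermitian because $R_3(\xi)$ is real antisymmetric (so that $(iR_3(\xi))^\ast = -i\,R_3(\xi)^T = iR_3(\xi)$). For a Hermitian matrix one has the equality $\|e^{M(\xi)t}\| = e^{\lambda_{\max}(M(\xi))\,t}$, and combining this with the eigenvalue bound $\lambda_{\max}(M(\xi))\leq -c|\xi|^2$ recalled just above the statement yields the required pointwise estimate.

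Assembling the pieces, one integrates the pointwise inequality:
\[
\|e^{\mathbb{A}t}\mathcal{G}\|_{L^2}^2
\;\leq\; \int_{\RR^3} e^{-2c|\xi|^2 t}\,|\widehat{\mathcal{G}}(\xi)|^2\,d\xi
\;=\; \|e^{c\Delta t}\mathcal{G}\|_{L^2}^2,
\]
which gives \eqref{eqn:linear_operator_eq} after taking square roots.

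The only nontrivial point is the reduction from operator norm to $\lambda_{\max}$, which would fail in general for non-normal symbols; here, the (perhaps not obvious at first glance) Hermiticity of $M(\xi)$, stemming from the antisymmetry of $R_3(\xi)$, removes the obstacle. Everything else is Plancherel and the eigenvalue estimate already established in \cite{NichePerusato2020}.
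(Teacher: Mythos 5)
Your proof is correct and carries out precisely the argument the paper leaves implicit: the paper states the eigenvalue estimate from \cite{NichePerusato2020} and declares Lemma~\ref{lema-linear_opertaror} an immediate consequence, which is exactly your Plancherel-plus-pointwise-bound reduction. The one step that genuinely needs checking---that the eigenvalue bound $\lambda_{\max}(M(\xi))\le -c|\xi|^2$ yields the operator-norm bound $\|e^{M(\xi)t}\|\le e^{-c|\xi|^2 t}$---is exactly where you invoke the Hermiticity of $M(\xi)$ (both off-diagonal blocks are $i\chi R_3(\xi)$ with $R_3(\xi)$ real antisymmetric, and the diagonal blocks are real symmetric), and you rightly flag that this would fail for a non-normal symbol, where one would instead need the numerical-range bound on $\tfrac12(M+M^{\ast})$.
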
 
It is also worth to note the following result concerning the Lamè semigroup.   
	\begin{lemma}\label{lema-lame-operator}
		If $ \mathcal{G} \in L^2_\sigma(\mathbb{R}^3) \times {L}^2(\mathbb{R}^3) $, then
	\begin{equation}\label{eqn:lame_operator}
\|e^{\mathbb{L}t} \:\!\mathcal{G} \|_{L^2} \leq \left\|e^{c\,\Delta t} \:\!\mathcal{G} \right\|_{L^2},
\end{equation} 
for all $t \geq 0$. 
\end{lemma}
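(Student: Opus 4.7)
The plan is to pass to Fourier space and exploit the self-adjointness and negative-definiteness of the symbol of the Lamé operator $\mathbb{L}$. Since $\mathbb{L}$ acts only on the vector-valued (micro-rotational) slot of $\mathcal{G}=(\uu,\ww)$, it suffices to prove the bound for a generic $\ww \in L^2(\mathbb{R}^3)^3$; the inequality for the full pair then follows by combining this with the trivial identity on the divergence-free $\uu$-slot.

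The key step is a pointwise-in-$\xi$ spectral computation. Taking the Fourier transform of
\[
\mathbb{L}\ww \;=\; (\mu+\chi)\,\Delta \ww \,+\, \kappa\, \nabla(\nabla\cdot \ww),
\]
one obtains the matrix symbol
\[
\widehat{\mathbb{L}}(\xi) \;=\; -(\mu+\chi)\,|\xi|^2\, \mathrm{Id}_{3\times 3} \;-\; \kappa \, \xi \otimes \xi ,
\]
which is real, symmetric, and negative-definite. Its eigenstructure is transparent: any unit vector orthogonal to $\xi$ is an eigenvector with eigenvalue $-(\mu+\chi)|\xi|^2$ (multiplicity $2$), while $\xi/|\xi|$ itself is an eigenvector with eigenvalue $-\bigl((\mu+\chi)+\kappa\bigr)|\xi|^2$. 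Since $\kappa>0$, the largest (least negative) eigenvalue is $\lambda_{\max}\bigl(\widehat{\mathbb{L}}(\xi)\bigr) = -(\mu+\chi)|\xi|^2$.

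With this spectral information, the conclusion reduces to a one-line Plancherel argument. Self-adjointness of $\widehat{\mathbb{L}}(\xi)$ upgrades the spectral radius to an operator-norm bound, giving $\bigl\|e^{t\widehat{\mathbb{L}}(\xi)}\bigr\|_{\mathrm{op}} = e^{-(\mu+\chi)|\xi|^2 t}$. Integrating the pointwise inequality $\bigl|e^{t\widehat{\mathbb{L}}(\xi)} \widehat{\ww}(\xi)\bigr|^2 \leq e^{-2(\mu+\chi)|\xi|^2 t}\,|\widehat{\ww}(\xi)|^2$ over $\xi \in \mathbb{R}^3$ and recognizing the right-hand side as $\|e^{(\mu+\chi)\Delta t}\ww\|_{L^2}^2$ yields \eqref{eqn:lame_operator} with $c=\mu+\chi$ (or any $0<c\leq \mu+\chi$, in particular a value compatible with the constant produced by Lemma \ref{lema-linear_opertaror}).

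I do not anticipate a real obstacle: the whole lemma is a short spectral observation about the Lamé semigroup. The only point worth flagging is that the passage from spectral radius to operator norm of $e^{t\widehat{\mathbb{L}}(\xi)}$ uses self-adjointness in an essential way — for a non-normal symbol the best one could say in general is a logarithmic-norm bound, and no clean heat-kernel comparison of the form \eqref{eqn:lame_operator} would follow. Thus the crucial (and, here, easily verified) structural input is that the Lamé operator is essentially self-adjoint in Fourier variables.
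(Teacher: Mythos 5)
Your proof is correct and supplies precisely the argument the paper leaves implicit: Lemma~\ref{lema-lame-operator} is stated without proof, and the intended justification is exactly the Fourier-symbol spectral computation you carry out, mirroring the eigenvalue-based reasoning the authors invoke (via the Niche--Perusato estimate) for the companion Lemma~\ref{lema-linear_opertaror}. The only cosmetic note is that the symbol's leading coefficient in the paper is stated as $\mu+\chi$ in the definition of $\mathbb{L}$ but appears as $\gamma$ in \eqref{L_ww_equation} and \eqref{eqn:matrix-symbols} --- an inconsistency in the source, not in your argument, which goes through unchanged with either constant.
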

\begin{thm}\label{thm_linear_aprox}
	Let $ \zz(\cdot,t) $ be a weak solution for system \eqref{micropolar} for all $ t>0 $.  If \,   $32 \chi (\mu + \chi + \gamma) > 1$, then, given any pair of initial times $ \tilde{t}_0 \geq t_0 \geq 0  $, one has
	\begin{equation}\label{eqn:Difference_estimate_z} 
		\begin{split}
		\|D^m \bar{\uu}(\cdot,t;{t_0}) - D^m\bar{\uu}(\cdot,t;\tilde{t}_0) \|_{L^2(\RR^3)}  \leq C\, (t-\tilde{t}_0)^{-\frac{5}{4} - \frac{m}{2}},
		\end{split}
	\end{equation} 
		\begin{equation}\label{eqn:Difference_estimate_w}
	\begin{split}
	\|D^m \bar{\ww}(\cdot,t;{t_0}) - D^m\bar{\ww}(\cdot,t;\tilde{t}_0) \|_{L^2(\RR^3)}  \leq C\, e^{-2\,\chi\, (t-\tilde{t}_0)}\,(t-\tilde{t}_0)^{-\frac{5}{4} - \frac{m}{2}}.
	\end{split}
	\end{equation}
	\end{thm}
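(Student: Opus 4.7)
The plan is to invoke Duhamel's formula for the nonlinear system (\ref{micropolar}) on the short interval $[t_0,\tilde{t}_0]$, so that the difference of the two linear evolutions is expressed as an integral whose $t$-dependence sits entirely inside the semigroup $e^{\mathbb{A}(t-s)}$; Lemma \ref{lema-linear_opertaror} and Lemma \ref{lema-lame-operator} then reduce the estimate to standard heat-kernel calculations. In mild form,
\[
\zz(\cdot,\tilde{t}_0) \;=\; e^{\mathbb{A}(\tilde{t}_0-t_0)}\zz(\cdot,t_0)\;-\;\int_{t_0}^{\tilde{t}_0} e^{\mathbb{A}(\tilde{t}_0-s)}\,\mathbb{P}\,\nabla\cdot(\uu\otimes\zz)(\cdot,s)\,ds,
\]
where the divergence-form nonlinearity stands for the pair $(\nabla\cdot(\uu\otimes\uu),\nabla\cdot(\uu\otimes\ww))$ and $\mathbb{P}$ denotes the Leray projection acting on the $\uu$-slot. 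Composing with $e^{\mathbb{A}(t-\tilde{t}_0)}$ and using the semigroup property gives
\[
\bar{\zz}(\cdot,t;t_0)\,-\,\bar{\zz}(\cdot,t;\tilde{t}_0) \;=\; \int_{t_0}^{\tilde{t}_0} e^{\mathbb{A}(t-s)}\,\mathbb{P}\,\nabla\cdot(\uu\otimes\zz)(\cdot,s)\,ds.
\]

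Taking $D^m$ on the right-hand side, commuting past the Fourier multipliers $e^{\mathbb{A}(t-s)}$, $\mathbb{P}$, and $\nabla\cdot$, and noting that Lemma \ref{lema-linear_opertaror} is proved at the symbol level and so applies to any Fourier-transformed input, one obtains
\[
\|D^m e^{\mathbb{A}(t-s)}\mathbb{P}\nabla\cdot g\|_{L^2}\;\leq\;\|D^{m+1}e^{c\Delta(t-s)}g\|_{L^2}\;\leq\;C(t-s)^{-\tfrac{m}{2}-\tfrac{5}{4}}\|g\|_{L^1}
\]
via the standard $L^1\!\to\!L^2$ heat-kernel estimate in $\RR^3$ applied to $m+1$ derivatives. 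Plugging in $g=\uu\otimes\zz$, whose $L^1$-norm is controlled by $\|\uu\|_{L^2}\|\zz\|_{L^2}\leq\|\zz_0\|_{L^2}^{2}$ by the energy inequality, integration gives
\[
\|D^m[\bar{\zz}(\cdot,t;t_0)-\bar{\zz}(\cdot,t;\tilde{t}_0)]\|_{L^2}\;\leq\; C\|\zz_0\|_{L^2}^{2}\!\!\int_{t_0}^{\tilde{t}_0}\!\!(t-s)^{-\tfrac{m}{2}-\tfrac{5}{4}}ds\;\leq\;C\|\zz_0\|_{L^2}^{2}(\tilde{t}_0-t_0)(t-\tilde{t}_0)^{-\tfrac{m}{2}-\tfrac{5}{4}},
\]
since the integrand is maximized at $s=\tilde{t}_0$, which is (\ref{eqn:Difference_estimate_z}). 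For (\ref{eqn:Difference_estimate_w}) one projects the same identity onto the $\ww$-slot and exploits the factorization $e^{(\mathbb{L}-2\chi\,\mathrm{Id})\tau}=e^{-2\chi\tau}e^{\mathbb{L}\tau}$ for the linear $\ww$-dynamics: a secondary Duhamel in the $\ww$-equation absorbs the coupling $\chi\nabla\times\bar{\uu}$ against this same exponentially damped semigroup, so every $\ww$-component contribution of $e^{\mathbb{A}(t-s)}$ carries a factor $e^{-2\chi(t-s)}\leq e^{-2\chi(t-\tilde{t}_0)}$ uniformly in $s\in[t_0,\tilde{t}_0]$, which can be pulled outside the integral; Lemma \ref{lema-lame-operator} then dominates the Lam\'e part by the heat semigroup, and the same $L^1$-calculation produces the polynomial factor.

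I expect the main obstacle to be precisely this $\ww$-component estimate: a pointwise-in-time analysis of $\bar{\ww}(\cdot,t;s_0)$ for a single $s_0$ cannot furnish exponential decay, because the coupling $\chi\nabla\times\bar{\uu}$ contaminates the $\ww$-slot with contributions that decay only polynomially. The resolution is that the \emph{difference} of two linear evolutions is representable as an integral over the \emph{bounded} window $[t_0,\tilde{t}_0]$, on which $e^{-2\chi(t-s)}$ is uniformly bounded by $e^{-2\chi(t-\tilde{t}_0)}$ and can be extracted cleanly. A subsidiary technical point is the justification of the mild formulation for Leray--Hopf weak solutions of (\ref{micropolar}); this is routine once $t_0$ is past the eventual regularity time supplied by Theorem \ref{regularity_thm}, and the general case follows by the energy inequality.
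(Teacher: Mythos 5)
Your overall skeleton is the right one — Duhamel on the bounded window $[t_0,\tilde{t}_0]$, passing $e^{\mathbb{A}(t-s)}$ to the heat semigroup via Lemma \ref{lema-linear_opertaror}, the $L^1\!\to\!L^2$ heat--kernel estimate with the energy inequality, and finally extracting an exponential factor for the $\ww$-slot. That is also the path the paper takes. However, there are two places where what you wrote does not actually close.

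\textbf{The mild formulation.} You write the Duhamel identity directly for the weak solution $\zz$ and declare at the end that its validity is ``routine'' for $t_0$ past the regularity time and that ``the general case follows by the energy inequality.'' The theorem asserts the bound for \emph{all} $t_0\geq 0$, and the energy inequality by itself does not imply that a Leray--Hopf weak solution satisfies the integral (mild) equation on $[t_0,\tilde{t}_0]$ when $t_0<t^*$. This is exactly what the paper's proof works around: it never uses the mild formulation for $\zz$ itself. Instead it introduces the Leray mollified system with data $G_\delta \ast \zz_0$, for which the Duhamel identity is unproblematic, writes $\bar{\zz}(\cdot,t;t_0)=e^{\mathbb{A}(t-t_0)}[\zz(\cdot,t_0)-\zz_\delta(\cdot,t_0)]+e^{\mathbb{A}(t-t_0)}\zz_\delta(\cdot,t_0)$, and then lets $\tilde\delta\to 0$ along Leray's weakly convergent subsequence, killing the error term $I_{m,\delta}$ by dominated convergence on compact sets $\mathbb{K}\subset\RR^3$. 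Your shortcut skips all of this and needs to be either cited from the literature for the micropolar system or reproduced.

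\textbf{The exponential factor for $\ww$.} The assertion that ``every $\ww$-component contribution of $e^{\mathbb{A}(t-s)}$ carries a factor $e^{-2\chi(t-s)}$'' is not correct, and this is the crux of \eqref{eqn:Difference_estimate_w}. The symbol matrix $M(\xi)$ in \eqref{eqn:matrix-symbols} has the damping $-2\chi$ only in the $\ww\!\to\!\ww$ block; the off-diagonal block $i\chi R_3(\xi)$ feeds the $\uu$-input into the $\ww$-output, and that branch is damped only by $-( \mu+\chi)|\xi|^2$, i.e.\ polynomially. Concretely, your own ``secondary Duhamel'' for the $\ww$-slot reads
\begin{equation*}
\bigl[e^{\mathbb{A}\tau}g\bigr]_{\ww}=e^{-2\chi\tau}e^{\mathbb{L}\tau}g_{\ww}
+\chi\int_{0}^{\tau}e^{-2\chi(\tau-r)}e^{\mathbb{L}(\tau-r)}\nabla\wedge\bigl[e^{\mathbb{A}r}g\bigr]_{\uu}\,dr,
\end{equation*}
and in the second term the weight $e^{-2\chi(\tau-r)}$ is of order one when $r$ is close to $\tau$, so it cannot be bounded by $e^{-2\chi\tau}$. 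Swapping the $r$- and $s$-integrals then produces a contribution supported on the full interval $[\tilde{t}_0,t]$, not only on the bounded window, and no uniform exponential can be pulled out of it. The paper handles this by running Duhamel \emph{at the level of the $\ww$-equation itself} (not at the level of $e^{\mathbb{A}\tau}$ acting on a forcing), which isolates the $2\chi$-damping cleanly on the $\ww$ piece, isolates the bounded-window nonlinear term $(\tilde{\uu}_\delta\cdot\nabla)\ww_\delta$, and pushes the remaining $\chi\nabla\wedge\{\bar{\uu}-\bar{\uu}_\delta\}$ coupling into the vanishing term $\mathcal{J}_{m,\delta}$ through the mollified approximation. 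Your argument, as written, does not achieve this separation and therefore does not establish the exponential prefactor claimed in \eqref{eqn:Difference_estimate_w}.
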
 
\begin{proof} 
 In order to prove \eqref{eqn:Difference_estimate_z} and \eqref{eqn:Difference_estimate_w}, we consider a regularized system as follows (see e.g. \cite{MR673830, MR1555394}):
 Let $ G_\delta $ be a standard mollifier, set $ \tilde{\zz}_{0,\delta}(\cdot) \in C^\infty (\mathbb{R}^6)$ as the convolution of $ \zz_0 (\cdot)$ with $ G_\delta $, $ \delta > 0 $ and define $ \zz_\delta $, $ p_\delta $ as the (unique, globally defined) classical $ L^2 $ solution for the system  
	\begin{equation*}
	\left\{\begin{array}{l}
	\frac{\partial{\uu_\delta}}{\partial t} + (\tilde{\uu}_\delta \cdot \nabla)\uu_\delta -(\mu+\chi) \Delta \uu_\delta + \nabla p_\delta  =   \chi\,\nabla \wedge
	\ww_\delta, \\ \noalign{\medskip}
	\frac{\partial{\ww_\delta}}{\partial t}+ (\tilde{\uu}_\delta \cdot \nabla )
	\ww_\delta -  \gamma\Delta \ww_\delta -
	\kappa\nabla( \nabla \cdot \ww_\delta)
	+ 2 \chi\ww_\delta = \,
	\chi\,\nabla\wedge \uu_\delta, \\ \noalign{\medskip}
	\nabla \cdot \uu_\delta  =0, \\ \noalign{\smallskip}
	\zz_\delta(\cdot,0)= \tilde{\zz}_{0,\delta}:=G_\delta \ast \zz_0 \in \bigcap_{m=1}^\infty {\bf H}^m (\mathbb{R}^6)  ,
	\end{array}\right.
	\end{equation*}
	where $ \tilde{\zz}_\delta (\cdot,t):=G_\delta \ast \zz_\delta (\cdot,t)  $. It was shown by Leray that, for some suitable  sequence $ \tilde{\delta} \to 0 $, one has the $ L^2 $ weak convergence property $ \zz_{\tilde{\delta}} (\cdot,t) \rightharpoonup \zz(\cdot,t) $ as $ \tilde{\delta} \to 0 $, for all $ t \geq 0 $.  Moreover, it is easy to obtain  the energy inequality
	\begin{equation}\label{eqn:weak-inequality}
	\|\zz_\delta (\cdot,t) \|^2_{L^2} \leq 2\nu \int_{0}^{t} \|D \zz_\delta (\cdot,\tau) \|^2_{L^2} d\tau \leq \|\zz_0 \|^2_{L^2}.
	\end{equation} 
	Let $$ \QQ_\delta  (\cdot,t): = \left[
	\begin{array}{ccc}
		(\tilde{\uu}_\delta \cdot \nabla)\uu_\delta - \nabla p_\delta\\
		(\tilde{\uu}_\delta \cdot \nabla)\ww_\delta  
		\\
	\end{array}
	\right]\!\!\;\!$$
and write $ \bar{\zz}(\cdot,t;t_0) $ as 
\begin{equation*}
\bar{\zz}(\cdot,t;t_0) = e^{\mathbb{A}(t-t_0)  } [\zz (\cdot,t_0) - \zz_\delta (\cdot,t_0) ] + e^{\mathbb{A}(t-t_0)} \zz_\delta(\cdot,t_0), \quad t>t_0.
\end{equation*} 
Observing that
\begin{equation*}
\begin{split}
\zz_\delta(\cdot,t) = e^{\mathbb{A}t_0 } \tilde{\zz}_{0,\delta} + \int_{0}^{t_0} e^{\mathbb{A}(t_0 - \tau) } \QQ_\delta(\cdot,\tau) d\tau, 
\end{split}
\end{equation*} 	
we obtain
\begin{equation*}
\bar{\zz}(\cdot,t;t_0) = e^{\mathbb{A}(t-t_0)} \, [\,\zz (\cdot,t_0) - \zz_\delta (\cdot,t_0)  \,] + e^{\mathbb{A} t} \tilde{\zz}_{0,\delta} + \int_{0}^{t_0} e^{\mathbb{A}(t - \tau) } \QQ_\delta(\cdot,\tau) d\tau, \quad \forall t>t_0\geq 0 .
\end{equation*}
Analogously,
\begin{equation*}
\bar{\zz}(\cdot,t;\tilde{t}_0) = e^{\mathbb{A}(t-\tilde{t}_0)} \, [\zz (\cdot,\tilde{t}_0) - \zz_\delta (\cdot,\tilde{t}_0) ] + e^{\mathbb{A} t} \tilde{\zz}_{0,\delta} + \int_{0}^{\tilde{t}_0} e^{\mathbb{A}(t - \tau) } \QQ_\delta(\cdot,\tau) d\tau, \quad \forall t>\tilde{t}_0 \geq t_0 .
\end{equation*}
Therefore, given an arbitrary $\mathbb{K} \subset \RR^3$ compact, it follows, from Lemma \ref{lema-linear_opertaror}, 
\begin{equation}\label{eqn: before-estimate}
\begin{split}
\|D^m\bar{\zz}(\cdot,t;\tilde{t}_0) - D^m\bar{\zz}(\cdot,t;t_0) \|_{L^2(\mathbb{K})} \leq I_{m,\delta}(t) + \int_{t_0}^{\tilde{t}_0} \| e^{\mathbb{A}(t -\tau)} D^m\QQ_\delta(\cdot,\tau)\|_{L^2(\RR^3)} \,d\tau \\
\leq I_{m,\delta}(t) + \int_{t_0}^{\tilde{t}_0} \| e^{\Delta\,(t -\tau)} D^m \QQ_\delta(\cdot,\tau)\|_{L^2(\RR^3)} \,d\tau,
\end{split}
\end{equation}  
where 
\begin{equation*} 
\begin{split} 
 I_{m,\delta}(t):= \|D^m \{e^{\mathbb{A}(t-t_0)} \, [\zz (\cdot,t_0) - \zz_\delta (\cdot,t_0) ]\}\|_{L^2(\mathbb{K})} \\+ \|D^m \{ e^{\mathbb{A}(t-\tilde{t}_0)} \, [\zz (\cdot,\tilde{t}_0) - \zz_\delta (\cdot,\tilde{t}_0) ]\} \|_{L^2(\mathbb{K})}  .
 \end{split} 
 \end{equation*}
 Since $ \|D^m e^{\Delta (t - \tau)}  {\bf Q}_\delta(\cdot,\tau) \|_{L^2} \leq C (t-\tau)^{-\frac{5}{4} - \frac{m}{2}}  \|\zz_\delta (\cdot,\tau) \|^2_{L^2} $, one has
 \begin{equation}\label{eqn: Q-estimate}
 \begin{split}
 \int_{t_0}^{\tilde{t}_0} \| e^{\Delta\,(t -\tau)} D^m \QQ_\delta(\cdot,\tau)\|_{L^2(\RR^3)} \,d\tau \leq C\,\int_{t_0}^{\tilde{t}_0}(t-\tau)^{-\frac{5}{4} - \frac{m}{2}} \| \zz_\delta (\cdot,\tau)\|^2_{L^2(\RR^3)} \,d\tau \\
 \leq 
 C\,\int_{t_0}^{\tilde{t}_0}(t-\tau)^{-\frac{5}{4} - \frac{m}{2}} \| \zz_0\|^2_{L^2(\RR^3)} \,d\tau \leq C\, (t-\tilde{t}_0)^{-\frac{5}{4} - \frac{m}{2}} \| \zz_0\|^2_{L^2(\RR^3)} (\tilde{t}_0- t_0)\\ = C\,(t-\tilde{t}_0)^{-\frac{5}{4} - \frac{m}{2}}, 
 \end{split}
 \end{equation}
 for some $C > 0$, where we used bound \eqref{eqn:weak-inequality}. 
 
 Now,  taking $ \delta = \tilde{\delta} \to 0  $, by the $ L^2 $ weak convergence property mentioned above, we get $ I_{m,\delta}(t) \to 0 	 $, since,  for any $ \sigma, s > 0 $, 
 \begin{equation}\label{eqn:L2-convergence}
 \|D^m \,\{\,e^{\Delta s} [\,\zz(\cdot,\sigma) - \zz_\delta (\cdot,\sigma)\,]\,\}\|_{L^2(\mathbb{K})}  \to 0 \quad \text{as} \quad \tilde{\delta} \to 0.
 \end{equation}
 Indeed, defining $F_\delta (\cdot,s) :=D^m \,\{\,e^{\Delta s} [\,\zz(\cdot,\sigma) - \zz_\delta (\cdot,\sigma)\,]\,\} $ one has 
 \begin{equation*}
  F_\delta (\cdot,s)= H_m(\cdot,s) \ast [\zz(\cdot,\sigma) - \zz_\delta(\cdot,\sigma) ] , 
 \end{equation*}
 where $ H_m \in L^1(\RR^3) \cap L^\infty(\RR^3) $. Since $ \zz(\cdot,\sigma) - \zz_\delta(\cdot,\sigma)  \rightharpoonup 0 $ in $ L^2 $, it follows that 	$ F_{\tilde{\delta}} \to 0 $ for each $ x \in \RR^3 $. On the other hand, by inequality \eqref{eqn:weak-inequality} and the Cauchy-Schwarz inequality, one obtains
 \begin{equation*}
 \begin{split} 
 |F_\delta(x,s) | \leq \|H_\delta (\cdot,s) \|_{L^2(\RR^3)} \|\zz(\cdot,\sigma) - \zz_\delta(\cdot,\sigma) \|_{L^2(\RR^3)} \\ \leq 2 \|H_\delta (\cdot,s) \|_{L^2(\RR^3)} \|\zz_0 \|_{L^2(\RR^3)}
 \end{split}
 \end{equation*}  
 for all $ x \in \RR^3 $. The Lebesgue`s dominated convergence theorem implies that $ \|F_{\tilde{\delta}}(\cdot,s)  \|_{L^2(\mathbb{K})}  \to 0 $ as $\tilde{\delta} \to 0$. Hence, recalling that $ \mathbb{K} $ is compact, the proof of claim \eqref{eqn:L2-convergence} is complete. 
 
 Letting $ \tilde{\delta} \to 0  $ in \eqref{eqn: before-estimate}, \eqref{eqn:L2-convergence} and \eqref{eqn: Q-estimate}, we obtain
 \begin{equation*}
\|D^m\bar{\zz}(\cdot,t;\tilde{t}_0) - D^m\bar{\zz}(\cdot,t;t_0) \|_{L^2(\mathbb{K})} \leq C (t-\tilde{t}_0)^{-\frac{5}{4} - \frac{m}{2}},
\end{equation*}    
for each $ t>\tilde{t}_0 $ and for {\it any} compact $ \mathbb{K} \subset \RR^3 $. This is equivalent to the desired inequality \eqref{eqn:Difference_estimate_z}. 

To show inequality \eqref{eqn:Difference_estimate_w}, we work with the linearized micro-rotational equation \eqref{L_ww_equation} separately and take advantage of the linear damping term $ 2\,\chi \bar{\ww} $. To this end, just write $ \bar{\WW} : = e^{2\chi (t-t_0)} \bar{\ww} $, apply Duhamel's principle for $\bar{\WW}$ and write the result in terms of $ \bar{\ww}(\cdot,t;t_0) $ to get
\begin{equation*}
\begin{split}
\bar{\ww}(\cdot,t;t_0) =  e^{-2\,\chi (t-t_0) }\,e^{\mathbb{L}(t-t_0) } [\ww(\cdot,t_0) - \ww_\delta (\cdot,t_0) ] + \chi \int_{t_0}^{t} e^{-2 \chi (t-s)} e^{\mathbb{L} (t-s) } \nabla \wedge \bar{\uu}(\cdot,s) ds \\
+ e^{-2\chi (t-t_0)}e^{\mathbb{L} (t-t_0) } \ww_\delta(\cdot,t_0). 
\end{split}
\end{equation*}
Furthermore, from system \eqref{eqn:linear-system}, it also holds
\begin{equation*}
\begin{split}
e^{-2\chi (t-t_0)} e^{\mathbb{L}(t-t_0)} \ww_\delta (\cdot,t_0) = e^{-2\chi\,t} e^{\mathbb{L}\,t} \bar{\ww}_{0,\delta} + \chi\,\int_{0}^{t_0} e^{-2\chi (t-s)} e^{\mathbb{L}(t-s)} \nabla \wedge \bar{\uu}_{\delta} (\cdot,s)  ds \\
- \int_{0}^{t_0} e^{-2\chi (t-s) } e^{\mathbb{L}(t-s)}[(\tilde{\uu}_\delta \cdot \nabla)\ww_\delta ](\cdot,s) ds. 
\end{split}
\end{equation*}  
Consequently, after adding $ \pm \int_{0}^{t_0} e^{-2\,\chi (t-s)} e^{\mathbb{L}\,(t-s)} \nabla \wedge \bar{\uu} (\cdot,t)ds $, we get
\begin{equation*}
\begin{split}
\bar{\ww}(\cdot,t;t_0) =  e^{-2\,\chi (t-t_0) }\,e^{\mathbb{L}(t-t_0) } [\ww(\cdot,t_0) - \ww_\delta (\cdot,t_0) ] + e^{-2\chi \,t} e^{\mathbb{L}\,t } \bar{\ww}_{0,\delta} \\+ \chi \int_{t_0}^{t} e^{-2 \chi (t-s)} e^{\mathbb{L} (t-s) } \nabla \wedge \{\bar{\uu} - \bar{\uu}_\delta \}(\cdot,s)  ds + \chi \int_{0}^{t} e^{-2 \chi (t-s)} e^{\mathbb{L} (t-s) } \nabla \wedge \bar{\uu}(\cdot,s) ds \\
- \int_{0}^{t_0} e^{-2\chi (t-s) } e^{\mathbb{L}(t-s)}[(\tilde{\uu}_\delta \cdot \nabla)\ww_\delta ](\cdot,s) ds, \quad \forall t > t_0,
\end{split}
\end{equation*}
 for each $ t_0 \geq 0$. Therefore, by Lemma \ref{lema-lame-operator}
 \begin{equation}\label{eqn:before_estimate_w}
 \begin{split}
 \|D^m\bar{\ww}(\cdot,t;t_0)  - D^m\bar{\ww}(\cdot,t;\tilde{t}_0) \|_{L^2(\mathbb{K})} \leq \mathcal{J}_{m,\delta} (t) \\+ \int_{t_0}^{\tilde{t}_0} e^{-2\chi (t-s) } \| D^m e^{\mathbb{L}(t-s)}(\tilde{\uu}_\delta \cdot \nabla)\ww_\delta (\cdot,s) \|_{L^2(\mathbb{K})} ds \\ \leq \mathcal{J}_{m,\delta} (t) + \int_{t_0}^{\tilde{t}_0} e^{-2\chi (t-s) } \| D^m e^{\Delta(t-s)}(\tilde{\uu}_\delta \cdot \nabla)\ww_\delta (\cdot,s) \|_{L^2(\mathbb{K})} ds,
 \end{split}
 \end{equation}
where 
\begin{equation*}
\begin{split}
\mathcal{J}_{m,\delta} (t): =  e^{-2\,\chi (t-t_0) }\|D^m\{\,e^{\Delta(t-t_0) } [\ww(\cdot,t_0) - \ww_\delta (\cdot,t_0) ] \}\|_{L^2(\mathbb{K})}   \\ +\,\,
 e^{-2\,\chi (t-\tilde{t}_0) }\|D^m\{\,e^{\Delta(t-\tilde{t}_0) } [\ww(\cdot,\tilde{t}_0) - \ww_\delta (\cdot,\tilde{t}_0) ] \}\|_{L^2(\mathbb{K})} \\+\,\,
  \chi \int_{t_0}^{t} e^{-2 \chi (t-s)} \|D^{m+1}e^{\Delta (t-s) }\{  \bar{\uu} - \bar{\uu}_\delta \}(\cdot,s) \|_{L^2(\RR^3)} ds,
\end{split}
\end{equation*} 
for an arbitrary compact $ \mathbb{K} \subset \RR^3 $. As before, use the energy inequality \eqref{eqn:weak-inequality} to obtain
\begin{equation}\label{eqn: Q-estimate-w}
\begin{split}
\int_{t_0}^{\tilde{t}_0} e^{-2\chi (t-s) } \| D^m e^{\Delta(t-s)}(\tilde{\uu}_\delta \cdot \nabla)\ww_\delta (\cdot,s) \|_{L^2(\RR^3)} ds \\ \leq e^{-2\chi (t-\tilde{t}_0) }\int_{t_0}^{\tilde{t}_0}(t-s)^{-\frac{5}{4} - \frac{m}{2}}  \| \zz_\delta (\cdot,s) \|^2_{L^2(\RR^3)} ds  \leq C\,e^{-2\chi (t-\tilde{t}_0) }(t-\tilde{t}_0)^{-\frac{5}{4} - \frac{m}{2}}, \,\,\, [\text{by} \eqref{eqn:weak-inequality}]
\end{split}
\end{equation}
with $ C = C(t_0,\tilde{t}_0, \|\zz_0\|) > 0 $. By the same argument as before (c.f. \eqref{eqn:L2-convergence}), one has
 \begin{equation}\label{eqn:L2-convergence-w}
 \mathcal{J}_{ m,\delta}(t) \to 0 \quad \text{as} \quad \delta = \tilde{\delta} \to 0.
 \end{equation}
 Therefore, by bounds \eqref{eqn:before_estimate_w}, \eqref{eqn: Q-estimate-w} and 
 the convergence \eqref{eqn:L2-convergence-w}, it follows that   
 \begin{equation*}
  \|D^m\bar{\ww}(\cdot,t;t_0)  - D^m\bar{\ww}(\cdot,t;\tilde{t}_0) \|_{L^2(\mathbb{K})} \leq  C\,e^{-2\chi (t-\tilde{t}_0) }(t-\tilde{t}_0)^{-\frac{5}{4} - \frac{m}{2}},
 \end{equation*}
 for each $ t>\tilde{t}_0 $ and for {\it any} compact $ \mathbb{K} \subset \RR^3 $. This is clearly equivalent to the desired inequality \eqref{eqn:Difference_estimate_w},
\end{proof}

\section{Proof of  Theorem \ref{thm1}}
Define $ \tilde{\lambda}_0(\alpha):= \limsup_{t \to \infty} t^\alpha \|\zz(\cdot,t) \|$. 
We first assume $\tilde{\lambda}_0(\alpha) < \infty$ and prove
inequalities \eqref{inequality} with $\tilde{\lambda}_0 (\alpha )$ replacing ${\lambda}_0(\alpha) = \limsup_{t \to \infty} t^\alpha \|\uu(\cdot,t) \|$ in their right hand side. Finally, at the end of the section 
(see Claim \ref{rmk-remove-w} below), we finish the proof showing that $ \tilde{\lambda}_0(\alpha) = \lambda_0(\alpha)$ whenever $\lambda_0(\alpha)< \infty$.
\subsection{Proof of bound \eqref{inequality_z}}
For simplicity, we define the scalar function $ \zeta (\cdot,t):= \nabla \cdot \ww (\cdot,t) $ and $ \nu:=\min\{\mu,\gamma\}  $ as before.
Let $t \geq t_0 > t_*$, where $t_*$ is the regularity time (see \eqref{eqn:regularity_property}).	Given $\delta > 0$, taking the dot product of equations (\ref{uu_equation}) and (\ref{ww_equation}) with $2(s-t_0)^{2\alpha + \delta} \uu$ and $2(s-t_0)^{2\alpha + \delta} \ww$ respectively, and integrating by parts over $[t_0,t]\times\RR^3$, we get
\begin{equation}\label{sup_e1}
\begin{split}
(t-t_0)^{2\alpha+\delta}\norm{\zz(\cdot,t)}_\LL^2 + 2\nu\int_{t_0}^{t}(s-t_0)^{2\alpha+\delta}\norm{D\zz(\cdot,s)}_\LL^2 ds\\ +2\kappa \int_{t_0}^{t}(s-t_0)^{2\alpha+\delta}\norm{\zeta (\cdot,s)}_\LL^2 ds +2\chi \int_{t_0}^{t}(s-t_0)^{2\alpha+\delta}\norm{\ww (\cdot,s)}_\LL^2 ds  \\ \leq (2\alpha + \delta)\int_{t_0}^{t}(s-t_0)^{2\alpha+\delta-1}\norm{\zz(\cdot,s)}_\LL^2 ds.
\end{split}
\end{equation}
Due to assumption \eqref{eqn:lambda-assumption}, given  $0 < \epsilon < 2$, we can choose $t_0>t_*$ sufficiently large such that $t^\alpha \norm{\zz(\cdot,t)}_\LT < \tilde{\lambda}_0(\alpha) + \epsilon$. Combining this with bound (\ref{sup_e1}), we get
\begin{equation}\label{sup_e2}
\begin{split}
(t-t_0)^{2\alpha+\delta}\norm{\zz(\cdot,t)}_\LL^2 + 2\nu\int_{t_0}^{t}(s-t_0)^{2\alpha+\delta}\norm{D\zz(\cdot,s)}_\LL^2 ds\\ +2\kappa \int_{t_0}^{t}(s-t_0)^{2\alpha+\delta}\norm{\zeta (\cdot,s)}_\LL^2 ds +2\chi \int_{t_0}^{t}(s-t_0)^{2\alpha+\delta}\norm{\ww (\cdot,s)}_\LL^2 ds  \\ \leq (2\alpha + \delta)(\tilde{\lambda}_0(\alpha)+ \epsilon)^2\int_{t_0}^{t}(s-t_0)^{2\alpha+\delta-1}s^{-2\alpha} ds \\
\leq (2\alpha + \delta)(\tilde{\lambda}_0(\alpha)+ \epsilon)^2 \frac{(t-t_0)^\delta}{\delta}.
\end{split}
\end{equation}	 
In particular,
\begin{equation*}
(t-t_0)^{2\alpha}\norm{\zz(\cdot,t)}_\LL^2 \leq \frac{2\alpha + \delta}{\delta}(\tilde{\lambda}_0(\alpha)+ \epsilon)^2 
\end{equation*}
and
\begin{equation}\label{sup_e3}
\int_{t_0}^{t}(s-t_0)^{2\alpha+\delta}\norm{D\zz(\cdot,s)}_\LL^2 ds \leq \frac{1}{2\nu}(2\alpha + \delta)(\tilde{\lambda}_0(\alpha)+ \epsilon)^2 \frac{(t-t_0)^\delta}{\delta},
\end{equation}
for all $t \geq t_0$.	
For the next step,  first observe that  differentiating equations (\ref{uu_equation}) and (\ref{ww_equation}) with respect to $x_l$ (for example), taking the dot product of equations (\ref{uu_equation}) and (\ref{ww_equation}) by $2(s-t_0)^{2\alpha + \delta+1}D_l \uu$ and $2(s-t_0)^{2\alpha + \delta+1}D_l \ww$, respectively, and integrating the result over $\RR^3 \times [t_0,t]$, we get, after using Lemma \ref{lema-Sobolev-inequality} and summing over $1 \leq l \leq 3$,
\begin{equation*}
\begin{split}
(t-t_0)^{2\alpha+\delta+1}\norm{D\zz(\cdot,t)}_\LL^2\\ + 2\nu\int_{t_0}^{t}(s-t_0)^{2\alpha+\delta+1}\norm{D^2 \zz(\cdot,s)}_\LL^2 ds\\ +2 \kappa \int_{t_0}^{t}(s-t_0)^{2\alpha+\delta+1}\norm{D\zeta (\cdot,s)}_\LL^2 ds +2\chi \int_{t_0}^{t}(s-t_0)^{2\alpha+\delta+1}\norm{D\ww (\cdot,s)}_\LL^2 ds  \\ \leq (2\alpha + \delta+1)\int_{t_0}^{t}(s-t_0)^{2\alpha+\delta}\norm{D\zz(\cdot,s)}_\LL^2 ds \\ + C\int_{t_0}^{t}(s-t_0)^{2\alpha+\delta+1} \norm{\zz(\cdot,s)}_{L^\infty(\RR^3)} \norm{D\zz(\cdot,s)}_\LL \norm{D^2\zz(\cdot,s)}_\LL ds \\
\leq 
 (2\alpha + \delta+1)\int_{t_0}^{t}(s-t_0)^{2\alpha+\delta}\norm{D\zz(\cdot,s)}_\LL^2 ds \\ + C\int_{t_0}^{t}(s-t_0)^{2\alpha+\delta+1} \norm{\zz(\cdot,s)}_{\LL}^{1/2} \norm{D\zz(\cdot,s)}_\LL^{1/2}\norm{D^2\zz(\cdot,s)}_\LL^2 ds.
\end{split}
\end{equation*}
Since $ \|D\zz(\cdot,t) \|_{L^2(\RR^3)} \to 0  $ as $ t $ tends to infinity\footnote{Actually, for arbitrary initial data $\zz_0 \in L^2  $, a faster decay rate holds for the gradient: $ t^{1/2} \|D\zz \| \to 0 $ as $ t \to \infty$. See e.g \cite{MR3854334}, for more details.},  increasing $ t_0 $ if necessary, we obtain 
\begin{equation*}
\begin{split}
(t-t_0)^{2\alpha+\delta+1}\norm{D\zz(\cdot,t)}_\LL^2\\ + (2-\epsilon)\,\nu\int_{t_0}^{t}(s-t_0)^{2\alpha+\delta+1}\norm{D^2 \zz(\cdot,s)}_\LL^2 ds\\ +2 \kappa \int_{t_0}^{t}(s-t_0)^{2\alpha+\delta+1}\norm{D\zeta (\cdot,s)}_\LL^2 ds +2\chi \int_{t_0}^{t}(s-t_0)^{2\alpha+\delta+1}\norm{D\ww (\cdot,s)}_\LL^2 ds  \\ \leq (2\alpha + \delta+1)\int_{t_0}^{t}(s-t_0)^{2\alpha+\delta}\norm{D\zz(\cdot,s)}_\LL^2 ds.
\end{split}
\end{equation*}	   
This, with inequality \eqref{sup_e3}),  immediately gives
\begin{equation*}
(t-t_0)^{2\alpha+1}\norm{D\zz(\cdot,t)}_\LL^2 \leq \frac{1}{(2-\epsilon)\delta\nu}(2\alpha + \delta+1)(2\alpha + \delta)(\tilde{\lambda}_0(\alpha)+ \epsilon)^2 
\end{equation*}
and
\begin{equation*}
\begin{split}
\int_{t_0}^{t}(s-t_0)^{2\alpha+\delta+1}\norm{D^2\zz(\cdot,s)}_\LL^2 ds\\ \leq \frac{1}{\delta((2-\epsilon)\nu)^2}(2\alpha + \delta+1)(2\alpha + \delta)(\tilde{\lambda}_0(\alpha)+ \epsilon)^2 (t-t_0)^\delta ,
\end{split}
\end{equation*}	 
which proves \eqref{inequality_z}, when $ \tilde{\lambda}_0(\alpha) < \infty$, for $ m=1$. Now, we proceed by an induction argument. Assume that, for some $ m \geq 2 $,
\begin{subequations}
	\begin{equation}
	(t-t_0)^{2\alpha + k} \norm{D^k \zz (\cdot,t)}^2_\LT \leq \frac{1}{\delta [(2-\epsilon)\lambda]^{k}} \bigg{[} \prod_{j=0}^{k} (2\alpha + j + \delta) \bigg{]} (\tilde{\lambda}_0(\alpha) + \epsilon)^2
	\end{equation}
	and
	\begin{equation}\label{des_ind2}
	\begin{split} 
	\int_{t_0}^{t} (s-t_0)^{2\alpha + k + \delta} \norm{D^{k+1} \zz (\cdot,s)}^2_\LT ds \\ \leq \bigg{(} \frac{1}{\delta [(2-\epsilon)\lambda]^{k+1}} \bigg{[} \prod_{j=0}^{k} (2\alpha + j + \delta) \bigg{]} (\tilde{\lambda}_0(\alpha) + \epsilon)^2 \bigg{)}(t-t_0)^\delta
	\end{split} 
	\end{equation}
\end{subequations}
hold for every $1 \leq k \leq m-1$, and for all $t \geq t_0$ (for any $t_0 > t_{**}$, for some $t_{**}>t_*$ sufficiently large, see \eqref{eqn:regularity_property}, \eqref{eqn:regularity_property_improved}). Then,  
applying $D_{l_1} D_{l_2} ... D_{l_m}$ to the equation (\ref{uu_equation}) and taking the dot product with $2(s-t_0)^{2\alpha+m+\delta}D_{l_1} D_{l_2} ... D_{l_m} \uu(\cdot,s)$ we get, (integrating over $\RR^3 \times [t_0,t]$ and summing over $1 \leq l_1, l_2, ... , l_m \leq 3$):

\begin{equation*}
\begin{split}
(t-t_0)^{2\alpha + m + \delta} \norm{D^m \uu(\cdot,t)}^2_\LT + 2 (\mu + \chi) \int_{t_0}^{t} (s-t_0)^{2\alpha + m + \delta} \norm{D^{m+1} \uu(\cdot,s)}^2_\LT ds \\
= (2\alpha + m + \delta) \int_{t_0}^{t} (s-t_0)^{2\alpha + m -1 + \delta} \norm{D^m \uu(\cdot,s)}^2_\LT ds \\ + 2 \sum_{i,j,l_1,l_2,...,l_m=1}^{3} \int_{t_0}^{t} (s-t_0)^{2\alpha + m + \delta} \int_{\RR^3} [D_j D_{l_1}...D_{l_m} \uu_i] [D_{l_1} ... D_{l_m} (\uu_i \uu_j)] dx ds \\
+ 2 \chi \sum_{l_1,...,l_m =1}^{3} \int_{t_0}^{t} (s-t_0)^{2\alpha + m + \delta} \int_{\RR^3} <D_{l_1} ... D_{l_m} \uu,  \nabla \wedge D_{l_1} ... D_{l_m} \ww> dx ds.
\end{split}
\end{equation*}	
Do the same for equation (\ref{ww_equation}). Adding up the resulting inequalities, one has
\begin{equation*}
\begin{split}
(t-t_0)^{2\alpha + m + \delta} \norm{D^m \zz(\cdot,t)}^2_\LT + 2 \nu \int_{t_0}^{t} (s-t_0)^{2\alpha + m + \delta} \norm{D^{m+1} \zz(\cdot,s)}^2_\LT ds \\ + 2 \chi \int_{t_0}^{t} (s-t_0)^{2\alpha + m + \delta} \norm{D^{m+1} \uu	(\cdot,s)}^2_\LT ds \\+ 4 \chi \int_{t_0}^{t} (s-t_0)^{2\alpha + m + \delta} \norm{D^{m} \ww(\cdot,s)}^2_\LT ds \\+ 2 \kappa  \int_{t_0}^{t} (s-t_0)^{2\alpha + m + \delta} \norm{D^{m} \zeta (\cdot,s)}^2_\LT ds \\ \leq (2\alpha + m + \delta) \int_{t_0}^{t} (s-t_0)^{2\alpha + m -1 + \delta} \norm{D^m \zz(\cdot,s)}^2_\LT \\
+K_m \sum_{l=0}^{\big{[} \frac{m}{2} \big{]}} \int_{t_0}^{t} (s-t_0)^{2\alpha + m + \delta}  \norm{D^l \zz(s)}_{L^\infty(\RR^3)} \norm{D^{m-l} \zz(s)}_\LT \norm{D^{m+1} \zz(s)}_\LT ds \\ + 4 \chi \sum_{l_1,...,l_m =1}^{3} \int_{t_0}^{t} (s-t_0)^{2\alpha + m + \delta} \int_{\RR^3} <D_{l_1} ... D_{l_m} \ww(x,s),  \nabla \wedge D_{l_1} ... D_{l_m} \uu(x,s)> dx ds,
\end{split}
\end{equation*}
	for some constant $K_m >0$ (which depends only on $m$). Observing that
\begin{equation*}
\begin{split}
4 \chi \sum_{l_1,...,l_m =1}^{3} (s-t_0)^{2\alpha + m + \delta} \int_{\RR^3} <D_{l_1} ... D_{l_m} \ww,  \nabla \wedge D_{l_1} ... D_{l_m} \uu> dx ds \\
\leq 2\chi \norm{D^m \ww(\cdot,s)}^2_\LT + 2 \chi \norm{D^{m+1} \uu(\cdot,s)}^2_\LT,
\end{split}
\end{equation*}
by Young`s inequality, the bound
\begin{equation*}
\begin{split}
K_m \sum_{l=0}^{\big{[} \frac{m}{2} \big{]}} \int_{t_0}^{t} (s-t_0)^{2\alpha + m + \delta}  \norm{D^l \zz(s)}_{L^\infty(\RR^3)} \norm{D^{m-l} \zz(s)}_\LT \norm{D^{m+1} \zz(s)}_\LT ds \\
\leq K_m \sum_{l=1}^{m-1} \int_{t_0}^{t} (s-t_0)^{2\alpha + m + \delta} \norm{\zz(s)}^{1/2}_\LT \norm{D \zz (s)}^{1/2}_\LT  \norm{D^{m+1} \zz (\cdot,s)}^2_\LT ds,
\end{split}
\end{equation*}
 and Lemma \ref{lema-Sobolev-inequality}, one obtains
\begin{equation}\label{sup_e6}
\begin{split}
(t-t_0)^{2\alpha + m + \delta} \norm{D^m \zz(\cdot,t)}^2_\LT + 2 \nu \int_{t_0}^{t} (s-t_0)^{2\alpha + m + \delta} \norm{D^{m+1} \zz(\cdot,s)}^2_\LT ds  \\+ 2 \chi \int_{t_0}^{t} (s-t_0)^{2\alpha + m + \delta} \norm{D^{m} \ww(\cdot,s)}^2_\LT ds \\+ 2 \kappa  \int_{t_0}^{t} (s-t_0)^{2\alpha + m + \delta} \norm{D^{m} \zeta (\cdot,s)}^2_\LT ds \\ \leq (2\alpha + m + \delta) \int_{t_0}^{t} (s-t_0)^{2\alpha + m -1 + \delta} \norm{D^m \zz(\cdot,s)}^2_\LT \\
+K_m \sum_{l=0}^{\big{[} \frac{m}{2} \big{]}} \int_{t_0}^{t} (s-t_0)^{2\alpha + m + \delta}  \norm{\zz(s)}^{1/2}_{L^2(\RR^3)} \norm{D \zz(s)}^{1/2}_\LT \norm{D^{m+1} \zz(s)}^2_\LT ds . 
\end{split}
\end{equation}
Again, since $\norm{\zz(\cdot,t)}_\LT$ is bounded in $[t_*,\infty)$ and  $\norm{D \zz(\cdot,t)}_\LT \to 0$ as $t \to \infty$ (see e.g \cite{MR3955606, MR3854334}), increasing $t_0 > t_*$ if necessary, inequality (\ref{sup_e6}) becomes\footnote{Although it is not required here, we actually have that $ \|\zz(t) \|$ is bounded in $ [0,\infty) $ and tends to $ 0 $ as $ t \to \infty $. Moreover, as already mentioned, $ t^{1/2}\|D\zz \| \to 0 $.}
\begin{equation}\label{sup_e7}
\begin{split}
(t-t_0)^{2\alpha + m + \delta} \norm{D^m \zz(\cdot,t)}^2_\LT +  (2-\epsilon)\nu \int_{t_0}^{t} (s-t_0)^{2\alpha + m + \delta} \norm{D^{m+1} \zz(\cdot,s)}^2_\LT ds \\ + 2 \chi \int_{t_0}^{t} (s-t_0)^{2\alpha + m + \delta} \norm{D^{m} \ww(\cdot,s)}^2_\LT ds \\+ 2 \kappa \int_{t_0}^{t} (s-t_0)^{2\alpha + m + \delta} \norm{D^{m} \zeta (\cdot,s)}^2_\LT ds \\ \leq (2\alpha + m + \delta) \int_{t_0}^{t} (s-t_0)^{2\alpha + m -1 + \delta} \norm{D^m \zz(\cdot,s)}^2_\LT, \quad \forall t > t_0,
\end{split}
\end{equation}	
for each $t_0 > t_*$ chosen appropriately large. Using the induction hypothesis (\ref{des_ind2}) for $k=m-1$, we obtain
\begin{equation}\label{sup_e8}
(t-t_0)^{2\alpha + m} \norm{D^m \zz (\cdot,t)}^2_\LT \leq \frac{1}{\delta [(2-\epsilon)\nu]^{m}} \bigg{[} \prod_{j=0}^{m} (2\alpha + j + \delta) \bigg{]} (\tilde{\lambda}_{0}(\alpha) + \epsilon)^2 .
\end{equation}
Moreover, from bounds (\ref{des_ind2}) and (\ref{sup_e7}), we also get
\begin{equation}\label{sup_e9}
\begin{split}
\int_{t_0}^{t} (s-t_0)^{2\alpha + m + \delta} \norm{D^{m+1} \zz (\cdot,s)}^2_\LT ds \\ \leq \bigg{(} \frac{1}{\delta [(2-\epsilon)\nu]^{m+1}} \bigg{[} \prod_{j=0}^{m} (2\alpha + j + \delta) \bigg{]} (\tilde{\lambda}_{0}(\alpha) + \epsilon)^2 \bigg{)}(t-t_0)^\delta.	
\end{split}
\end{equation}	
Since $\epsilon>0$ is arbitrary, this shows inequality \eqref{inequality_z} when $ \tilde{\lambda}_0(\alpha) < \infty$.
\begin{rem}
	Recalling inequalities \eqref{sup_e8} and \eqref{sup_e9}, we have shown that for every $ m \geq 0 $ and each $ \epsilon > 0 $ arbitrary, there exists $ t_{**}=t_{**} (m, \epsilon) > t_*$ sufficiently large so that 
	\begin{equation}\label{sup_e10} 
	\begin{split} 
	(t-t_0)^{2\alpha + m + \delta} \norm{D^m \zz(\cdot,t)}^2_\LT +  (2-\epsilon)\nu \int_{t_0}^{t} (s-t_0)^{2\alpha + m + \delta} \norm{D^{m+1} \zz(\cdot,s)}^2_\LT ds \\ \underbrace{+ 2 \chi \int_{t_0}^{t} (s-t_0)^{2\alpha + m + \delta} \norm{D^{m} \ww(\cdot,s)}^2_\LT ds}_{(i)} \\+ \underbrace{2 \kappa \int_{t_0}^{t} (s-t_0)^{2\alpha + m + \delta} \norm{D^{m} \zeta (\cdot,s)}^2_\LT ds}_{(ii)} \\\leq\frac{1}{\delta [(2-\epsilon)\nu]^{m}} \bigg{\{} \prod_{j=0}^{m} (2\alpha + j + \delta) \bigg{\}} (\tilde{\lambda}_{0}(\alpha) + \epsilon)^2 \, (t-t_0)^\delta, 
	\end{split} 
	\end{equation} 
$ \forall t \geq t_0 $, $ \forall t_0 \geq t_{**}(m,\epsilon) $. This suggests the following: the presence of the terms (i) and (ii)  on the LHS indicate that $ \| D^m \ww (\cdot,t) \|_{L^2(\RR^3)} $ and $ \| D^m \zeta (\cdot,t) \|_{L^2(\RR^3)} $ may decay faster (as $ t \to \infty $) than what is given by our general result \eqref{sup_e8} above. Indeed, in particular, inequality \eqref{sup_e8} implies that $ \|D^m \ww (\cdot,t)\|_{L^2} = O(t^{-\alpha - \frac{m}{2}}) $	and $ \|D^m \zeta (\cdot,t)\|_{L^2} = O(t^{-\alpha - \frac{m}{2} - \frac{1}{2}}) $ as $ t \to \infty $ for each $ m \geq 0 $. However, by inequality \eqref{sup_e10} they actually may decay faster when the vortex viscosity $ \chi $ is present. This is exactly the spirit of the next fundamental inequality \eqref{inequality_w} to be shown below. 
\end{rem}

\subsection{Proof of bound \eqref{inequality_w}}
As in the proof of inequality \eqref{eqn:Difference_estimate_w}, taking advantage of the linear damping term, using Lemma \ref{lema-lame-operator} and the Duhamel`s principle, one has
\begin{equation}\label{sup_des_chi_e3}
\begin{split}
t^{\alpha + \frac{m+1}{2}} \norm{D^m \ww(\cdot,t)}_\LL \leq \bigg{[} t^{\alpha + \frac{m+1}{2}} e^{-2\chi(t-t_0)}\norm{D^m e^{\mathbb{L}(t-t_0)}\ww(\cdot,t_0)}_\LL\\ 
\mbox{} + t^{\alpha + \frac{m+1}{2}} \int_{t_0}^{t} e^{-2\chi(t-s)}\norm{e^{\mathbb{L}(t-s)}D^m (\uu \cdot \nabla \ww)(\cdot,s)}_\LL ds 
\\\mbox{} + \chi t^{\alpha + \frac{m+1}{2}} \int_{t_0}^{t} e^{-2\chi(t-s)}\norm{e^{\mathbb{L}(t-s)}D^{m+1} \uu(\cdot,s)}_\LL ds \bigg{]}=:\mathcal{I}_1(t) + \mathcal{I}_2(t) + \mathcal{I}_3(t) .
\end{split}
\end{equation}
As already mentioned, in \cite{MR3906315} the following $ L^2  $ decay for the higher-order derivatives was proved:
\begin{equation*}
t^{k/2} \|D^k \zz \|_{L^2(\RR^3)} \to 0 \quad {\text{ as}} \quad t\to \infty, \,\, {\text{for each} } \,\, k\in \mathbb{Z}^+ . 
\end{equation*} 
This implies $\mathcal{I}_2(t)  \to 0 $ as $ t \to \infty $. Moreover, $ \mathcal{I}_1(t) $ clearly tends to $ 0 $ as $ t \to \infty $.
So, it remains to examine the last term $ \mathcal{I}_3(t) $. To this end, given $ \epsilon>0 $, choose $ t_0>t_{**}(m,\epsilon) $ large enough and fix $ 0<\sigma<1 $. By inequality \eqref{sup_e10}, we get     
\begin{equation*}
\begin{split}
 \mathcal{I}_3(t) \leq \chi t^{\alpha+\frac{m+1}{2}} \int_{t_0}^{t} e^{-2\chi(t-s)}\norm{e^{\gamma \Delta(t-s)}D^{m+1} \uu(\cdot,s)}_\LL ds \\
\leq \chi t^{\alpha+\frac{m+1}{2}} \int_{t_0}^{t} e^{-2\chi(t-s)}\norm{D^{m+1} \uu(\cdot,s)}_\LL ds \\ 
\leq \chi\,\nu^{-\frac{m+1}{2} } K_{\alpha,m+1} t^{\alpha+\frac{m+1}{2}} \int_{t_0}^{t} e^{-2\chi(t-s)} (\tilde{\lambda}_0(\alpha) + \epsilon) s^{-\alpha-\frac{m+1}{2}} ds \\
= \chi \nu^{-\frac{m+1}{2} } K_{\alpha,m+1}(\tilde{\lambda}_0(\alpha) + \epsilon) t^{\alpha+\frac{m+1}{2}} \bigg{\{} \int_{t_0}^{\sigma t} e^{-2\chi(t-s)} s^{-\alpha-\frac{m+1}{2}} ds \\+ \int_{\sigma t}^{t} e^{-2\chi(t-s)} s^{-\alpha-\frac{m+1}{2}} ds  \bigg{\}} \\
\leq\chi \nu^{-\frac{m+1}{2} } K_{\alpha,m+1}(\tilde{\lambda}_0(\alpha)+\epsilon) \bigg{\{} t^{\alpha+\frac{m+1}{2}}e^{-2\chi t(1-\sigma)} \bigg{(} 2\frac{t_0^{-\alpha-\frac{m-1}{2}} - (\sigma t)^{-\alpha-\frac{m-1}{2}}}{2\alpha + m -1} \bigg{)} \\ + \sigma^{-\alpha -\frac{m+1}{2}}\bigg{(} \frac{1- e^{-2\chi t (1-\sigma)}}{2\chi} \bigg{)} \bigg{\}}.
\end{split}
\end{equation*}
Letting $\epsilon \to 0^+$, $t \to \infty$ and $\sigma \to 1^-$ (in this order), one has
\begin{equation}\label{sup_des_chi_e8}
\limsup_{t \rightarrow \infty} \chi t^{\alpha+\frac{m+1}{2}} \int_{t_0}^{t} e^{-2\chi(t-s)}\norm{e^{\gamma \mathbb{L}(t-s)}D^{m+1} \uu}_\LL ds \leq \frac{K_{\alpha,m+1}}{2}  \,\nu^{-\frac{m+1}{2} } \,\tilde{\lambda}_0(\alpha).
\end{equation} 
This, together with inequality \eqref{sup_des_chi_e3}, leads to the desired inequality \eqref{inequality_w} in case $ \tilde{\lambda}_0(\alpha) < \infty $, i.e.,
\begin{equation}\label{sup_des_chi}
\limsup_{t \to \infty} t^{\alpha+\frac{m+1}{2}} \norm{D^m \ww(\cdot,t)}_{\LT} \leq  \frac{K_{\alpha,m+1}}{2}\;\;\nu^{-\frac{m+1}{2}} \;\; \tilde{\lambda}_0(\alpha).
\end{equation}	  
\qed 
\begin{claim}\label{rmk-remove-w}
The assumption $ \tilde{\lambda}_0(\alpha) < \infty $ in the proofs above is not necessary, since
\begin{equation}\label{assumption}
\text{ If }\,\,\,{\lambda}_0(\alpha) < \infty, \text{ then }\,\,t^\alpha \|\ww(\cdot,t) \|_{L^2} \to 0 \,\,\, \text{ as}\,\,\, t \to \infty, \,\,\text{i.e.},\,\,{\lambda}_0(\alpha) = \tilde{\lambda}_0(\alpha) . 
\end{equation} 
\end{claim}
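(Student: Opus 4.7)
The plan is to establish $t^\alpha \|\ww(\cdot,t)\|_{L^2} \to 0$ via a bootstrap that successively improves the decay rate of $\|\ww\|_{L^2}$. Two ingredients are combined: an energy estimate for $\ww$ that, via the linear damping $-2\chi\ww$, controls $\|\ww\|_{L^2}$ by $\|D\uu\|_{L^2}$; and the bounds of Theorem \ref{thm1}, already established in the text under the standing assumption $\tilde{\lambda}_0<\infty$, which upgrade the decay of $\|D\uu\|_{L^2}$ at each bootstrap step.

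Testing \eqref{ww_equation} with $\ww$, the convective term vanishes by $\nabla\cdot\uu=0$, and Young's inequality applied to the coupling $\chi\int(\nabla\wedge\uu)\cdot\ww\,dx$ yields
\[
\frac{d}{dt}\|\ww(\cdot,t)\|_{L^2}^2 + 2\chi\|\ww(\cdot,t)\|_{L^2}^2 \leq \frac{\chi}{2}\|D\uu(\cdot,t)\|_{L^2}^2 .
\]
Gronwall's lemma then gives, for every $t\geq t_0 \geq 0$,
\[
\|\ww(\cdot,t)\|_{L^2}^2 \leq e^{-2\chi(t-t_0)}\|\ww(\cdot,t_0)\|_{L^2}^2 + \frac{\chi}{2}\int_{t_0}^t e^{-2\chi(t-s)}\|D\uu(\cdot,s)\|_{L^2}^2\,ds .
\]
A routine splitting of the integral at $s=t/2$ shows that whenever $\|D\uu(\cdot,s)\|_{L^2} = O(s^{-\beta-1/2})$, one has $\|\ww(\cdot,t)\|_{L^2} = O(t^{-\beta-1/2})$; this is the half-power gain that drives the bootstrap.

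Using the unconditional bound $\|\ww(\cdot,t)\|_{L^2} = o(t^{-1/2})$ from \eqref{ww_to_zero} as a base case, the claim is immediate for $\alpha \leq 1/2$. For $\alpha > 1/2$, suppose inductively that $\|\ww(\cdot,t)\|_{L^2} = O(t^{-\beta})$ for some $1/2 \leq \beta \leq \alpha$. Combined with $\|\uu(\cdot,t)\|_{L^2} = O(t^{-\alpha}) = O(t^{-\beta})$, this yields $\tilde{\lambda}_0(\beta)<\infty$, so the proof of Theorem \ref{thm1} (valid under exactly this hypothesis) applies with $\alpha$ replaced by $\beta$ and $m=1$, producing $\|D\uu(\cdot,s)\|_{L^2} = O(s^{-\beta-1/2})$. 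Plugging this into the Gronwall estimate upgrades the decay exponent of $\|\ww\|_{L^2}$ from $\beta$ to $\beta+1/2$. After finitely many iterations, the exponent exceeds $\alpha$, giving $\|\ww(\cdot,t)\|_{L^2} = O(t^{-\alpha-1/2})$, so that $t^\alpha\|\ww(\cdot,t)\|_{L^2} = O(t^{-1/2}) \to 0$, as required.

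The main obstacle is the apparent circularity of invoking Theorem \ref{thm1} --- whose proof assumed $\tilde{\lambda}_0<\infty$ --- inside the proof of Claim \ref{rmk-remove-w}, which aims to verify precisely that hypothesis. The resolution is to apply the theorem only at intermediate exponents $\beta \leq \alpha$, for which the finiteness of $\tilde{\lambda}_0(\beta)$ has already been secured at the previous stage of the induction, ultimately traceable to the base case $t^{1/2}\|\ww\|_{L^2}\to 0$ supplied by \eqref{ww_to_zero}.
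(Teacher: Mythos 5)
Your argument is correct and matches the paper's bootstrap strategy: both exploit the half-power gain coming from the linear damping of $\ww$ and iterate the already-proven estimate at intermediate exponents $\beta\leq\alpha$ for which $\tilde{\lambda}_0(\beta)<\infty$ has just been secured, starting from the base case $t^{1/2}\|\ww\|_{L^2}\to 0$. The only mechanical difference is that you re-derive the $\ww$-from-$D\uu$ gain via a direct $L^2$ energy estimate on equation \eqref{ww_equation} together with Gronwall, whereas the paper obtains it by simply invoking \eqref{sup_des_chi} with $m=0$; the two devices are interchangeable.
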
   
\begin{proof} 	
We divide the proof into three cases, depending on the value of $\alpha$. 

If $\alpha < 1/2$, then $t^\alpha \norm{\ww(\cdot,t)}_\LT \to 0$ as $t \to \infty$ (see convergence \eqref{ww_to_zero} for $ s =0 $) and $\tilde{\lambda}_0(\alpha) < \infty$. Thus, if $\alpha \leq 1/2$ , then inequality 
{\eqref{sup_des_chi}}  leads to inequalities \eqref{inequality}.
	
	If $1/2 \leq \alpha < 1$, then $t^{1/2} \norm{\ww(\cdot,t)}_\LT \to 0$ as $t \to \infty$.
	 Now, letting $m=0$ in inequality \eqref{sup_des_chi}, one concludes
	\begin{equation*}
	\limsup_{t \to \infty} t \norm{\ww(\cdot,t)}_\LT < \infty.
	\end{equation*}	
	This implies $\limsup_{t \to \infty} t^\alpha \norm{\ww(\cdot,t)}_\LT = 0$, that is, ${\lambda}_0(\alpha) = \tilde{\lambda}_0(\alpha)$, leading once more to the desired inequalities \eqref{inequality}.
	
	If $\alpha \geq 1$, just repeat step-by-step the same argument above. 
	In at most $2[\alpha] + 1$ steps, one gets the desired result. 
	\end{proof}
\section{Proof of Theorem \ref{thm2}}
First of all, note that it is enough to prove inequalities \eqref{error_inequality_z} 
and \eqref{error_inequality_w} for times $t_0 > t_*$, the regularity time. Indeed, if the inequalities 
\eqref{error_inequality_z} 
and \eqref{error_inequality_w} hold for $t_0 > t_*$, then given $\hat{t} \in [0 , t_*]$, Theorem  
\ref{thm_linear_aprox} gives 
\begin{equation*}
\begin{split}
\limsup_{t \rightarrow \infty} t^{\alpha + \beta + \frac{m}{2}} \|\Ez(\cdot,t;\hat{t}) \|_{L^2} \leq 
\limsup_{t \rightarrow \infty} t^{\alpha + \beta + \frac{m}{2}} \|\Ez(\cdot,t;t_0) \|_{L^2}  \\
+ \limsup_{t \rightarrow \infty} t^{\alpha + \beta + \frac{m}{2}} \|\Ez(\cdot,t;t_0) - \Ez(\cdot,t;\hat{t})  \|_{L^2} = \limsup_{t \rightarrow \infty} t^{\alpha + \beta + \frac{m}{2}} \|\Ez(\cdot,t;t_0) \|_{L^2} < C_{\alpha,m} .
\end{split}
\end{equation*}  
The same argument works for $ \Ew $. So, from now on, consider $t_0 > t_*$. 
\subsection{Proof of inequality t\eqref{error_inequality_z}}
Using Lemma \ref{lema-linear_opertaror} and the integral representations of $ \zz $ and $ \bar{\zz} $, write
\begin{equation}\label{duhamel_error_Dm_e1}
\begin{split}
\norm{D^m \Ez(\cdot,t;t_0)}_\LT = \norm{D^m \left[ \zz(\cdot,t) - e^{\AAA (t-t_0)} \zz(\cdot,t_0) \right]}_\LT \\ \leq \int_{t_0}^{t} \norm{e^{c\, \Delta (t-s)} D^m \left[ \uu \cdot \nabla \uu \right]}_\LT + \norm{e^{c\, \Delta (t-s)} D^m \left[ \uu \cdot \nabla \ww \right]}_\LT ds.
\end{split}
\end{equation}  
We estimate in details the first term in the right hand side of inequality (\ref{duhamel_error_Dm_e1}), since the analysis for the second one is completely similar \footnote{Actually, the second term decays even faster than the first one due to inequality \eqref{inequality_w}}. To this end, take $t > 2 t_0$ and split the domain of integration as 
$ [t_0, \frac{t}{2} ] \cup [\frac{t}{2},t ] $. We begin by examining the integral on the interval $[\frac{t}{2},t ] $: 
\begin{equation*}
\begin{split}
t^{\alpha + \beta + m/2} \int_{t/2}^{t} \norm{e^{c\, \Delta(t-s)}D^m (\uu \cdot \nabla \uu)(\cdot,s)}_\LL ds \\ \leq
t^{\alpha + \beta + m/2} \int_{t/2}^{t} \sum_{l=0}^{m} \norm{D^l \uu (\cdot,s)}_{L^4(\RR^3)} \norm{D^{m-l+1} \uu (\cdot,s)}_{L^4(\RR^3)} ds \\ \leq  t^{\alpha + \beta+m/2} \int_{t/2}^{t} \sum_{l=0}^{m} \norm{D^l \uu(\cdot,s)}_\LL^{1/4} \norm{D^{l+1} \uu(\cdot,s)}_\LL^{3/4} \\ \times \norm{D^{m-l+1} \uu(\cdot,s)}_\LL^{1/4} \norm{D^{m-l+2} \uu(\cdot,s)}_\LL^{3/4} ds \\ \leq K \, \nu^{-(\frac{m}{2}+\frac{5}{4})} \, (\lambda_0(\alpha) + \epsilon)^2			
t^{\alpha + \beta+m/2} \int_{t/2}^{t} \sum_{l=0}^{m} s^{-\frac{\alpha}{4}-\frac{l}{8}}  s^{-\frac{3\alpha}{4}-\frac{3(l+1)}{8}}  s^{-\frac{\alpha}{4}-\frac{m-l+1}{8}} s^{-\frac{3\alpha}{4}-\frac{3(m-l+2)}{8}} ds \\ \leq K \, \nu^{-(\frac{m}{2}+\frac{5}{4})} \, (\lambda_0(\alpha) + \epsilon)^2			
t^{\alpha + \beta + m/2} \int_{t/2}^{t} s^{-2\alpha} s^{-\frac{2m+5}{4}} ds  \leq K (\lambda_0(\alpha) + \epsilon)^2,
\end{split}
\end{equation*}
where
\begin{equation*}
K = \sum_{l=0}^{m} K_{\alpha,l}^{1/4} K_{\alpha,l+1}^{3/4} K_{\alpha,m-l+1}^{1/4} K_{\alpha,m-l+2}^{3/4} 
\end{equation*}
and
\[
	K_{\alpha,m} =  \min_{\delta>0} \Bigg\{ \delta^{-1/2} \prod_{j=0}^{m} (\alpha + \frac{j}{2} +\delta)^{1/2}  \Bigg\}. 	\]
Note that to bound the $L^4$ norms in the inequality above, we have used the classical Sobolev-Nirenberg-Gagliardo inequality 
$ \|f\|_{L^4} \leq \|f \|^{1/4}_{L^2} \|Df \|^{3/4}_{L^2} $, for any $ f \in H^1 $. 

It remains to bound the integral over $[t_0, \frac{t}{2}]$. Since, $\|e^{\Delta \tau} D^m( \uu \cdot \nabla \uu)\|_{L^2} \leq \tau^{-\frac{5}{4} - \frac{m}{2}} \|\uu \|^2_{L^2}$ (see  \cite{MR760047,MR1994780}), we have
\begin{equation*}
\begin{split}
\int_{t_0}^{\frac{t}{2}} \norm{e^{c\Delta (t-s)} D^m \left[ \uu \cdot \nabla \uu \right]}_\LT ds \leq c^{-(\frac{m}{2}+\frac{5}{4})} \int_{t_0}^{\frac{t}{2}} (t-s)^{-(\frac{m}{2} + \frac{5}{4})} \norm{\uu(\cdot,s)}_\LT^2 ds.
\end{split}
\end{equation*}
We divide the analysis of the last integral into three cases depending on the value of $\alpha$: 


\noindent Case 1: $\alpha \in (0,1/2)$. Use that $\norm{\uu(\cdot,s)}_\LT^2 < (\lambda_0(\alpha)+\epsilon)^2 s^{-2\alpha}$ to get
\begin{equation*}
\begin{split}
t^{\alpha + \beta + m/2} \int_{t_0}^{t/2} \norm{e^{c\, \Delta (t-s)} D^m \left[ \uu \cdot \nabla \uu \right]}_\LT ds\\ \leq c^{-(\frac{m}{2}+\frac{5}{4})}\,(\lambda_0(\alpha)+\epsilon)^2 t^{2\alpha + 1/4 + m/2} \int_{t_0}^{t/2} (t-s)^{-5/4 -m/2}s^{-2\alpha} ds\\ \leq c^{-(\frac{m}{2}+\frac{5}{4})}\, (\lambda_0(\alpha)+\epsilon)^2 2^{2\alpha + 1/4+m/2} = c^{-(\frac{m}{2}+\frac{5}{4})}\, (\lambda_0(\alpha)+\epsilon)^2 \, 2^{\alpha + \beta +m/2}
\end{split}.
\end{equation*}

\noindent Case 2: $\alpha \in (1/2,1)$. Use that $\norm{\uu(\cdot,s)}_\LT^2 < \norm{\zz_0}(\lambda_0(\alpha)+\epsilon) s^{-\alpha}$ to get
\begin{equation*}
\begin{split}
t^{\alpha + \beta + m/2} \int_{t_0}^{t/2} \norm{e^{c\, \Delta (t-s)} D^m \left[ \uu \cdot \nabla \uu \right]}_\LT ds\\ \leq c^{-(\frac{m}{2}+\frac{5}{4})}\, \norm{\zz_0}(\lambda_0(\alpha)+\epsilon) t^{\alpha + 1/4 + m/2} \int_{t_0}^{t/2} (t-s)^{-5/4 -m/2}s^{-\alpha} ds\\
 \leq c^{-(\frac{m}{2}+\frac{5}{4})}\, \norm{\zz_0}(\lambda_0(\alpha)+\epsilon) 2^{\alpha + \beta +m/2}.
\end{split}
\end{equation*}

\noindent Case 3: $\alpha \in (1,5/4)$. Using the fact that $\alpha > 1$ implies $\int_{0}^{\infty}\norm{\uu(\cdot,s)}_\LT ds < \infty$, we get
\begin{equation*}
\begin{split}
t^{\alpha + \beta + m/2} \int_{t_0}^{t/2} \norm{e^{c\, \Delta (t-s)} D^m \left[ \uu \cdot \nabla \uu \right]}_\LT ds\\ \leq c^{-(\frac{m}{2}+\frac{5}{4})}\, \norm{\zz_0} t^{5/4 + m/2} \int_{t_0}^{t/2} (t-s)^{-5/4 -m/2}\norm{\uu(\cdot,s)}_\LT  ds \\
 \leq c^{-(\frac{m}{2}+\frac{5}{4})}\, \norm{\zz_0} 2^{5/4 +m/2} = c^{-(\frac{m}{2}+\frac{5}{4})}\, \norm{\zz_0} 2^{\alpha + \beta +m/2}.
\end{split}
\end{equation*}
This concludes the proof of  inequality \eqref{error_inequality_z}. \qed 
\subsection{Proof of inequality \eqref{error_inequality_w}}
Using Lemma \ref{lema-lame-operator}, 
we get, for $ \Ew(\cdot,t)$, the bound
\begin{equation}\label{eqn:preparation-w}
\begin{split}
t^{\alpha + \beta + \frac{m+1}{2}} \norm{D^m \Ew(\cdot,t)}_\LL \leq   t^{\alpha+ \beta + \frac{m+1}{2}} e^{-2\chi(t-t_0)}\norm{D^m e^{c\, \Delta(t-t_0)}\Ew(\cdot,t_0)}_\LL\\ 
+ t^{\alpha + \beta+ \frac{m+1}{2}} \int_{t_0}^{t} e^{-2\chi(t-s)}\norm{e^{c\, \Delta(t-s)}D^m (\uu \cdot \nabla \ww)(\cdot,s)}_\LL ds \\
+ \chi t^{\alpha + \beta+ \frac{m+1}{2}} \int_{t_0}^{t} e^{-2\chi(t-s)}\norm{e^{c\, \Delta(t-s)}D^{m+1} \Eu(\cdot,s)}_\LL ds.
\end{split}
\end{equation}
As in the proof of inequality \eqref{inequality_w} (see bound \eqref{sup_des_chi_e3}), all the terms in the right hand side of inequality \eqref{eqn:preparation-w} tend to $0$ as $t$ tends to infinity except for the last one. Hence, this is the term we will focus our attention. By inequality \eqref{error_inequality_z}, one has
\begin{equation*}
\begin{split}
\chi t^{\alpha+\beta+\frac{m+1}{2}} \int_{t_0}^{t} e^{-2\chi(t-s)}\norm{e^{\gamma \Delta(t-s)}D^{m+1} \Eu(\cdot,s)}_\LL ds \\
\leq \chi t^{\alpha+\beta+\frac{m+1}{2}} \int_{t_0}^{t} e^{-2\chi(t-s)}\norm{D^{m+1} \Eu(\cdot,s)}_\LL ds \\ 
\leq (C(\nu,\chi,\alpha,m+1)+\epsilon)\chi t^{\alpha+\beta+\frac{m+1}{2}} \int_{t_0}^{t} e^{-2\chi(t-s)}  s^{-\alpha -\beta -\frac{m+1}{2}} ds \\
= (C(\nu,\chi,\alpha,m+1)+\epsilon)\chi t^{\alpha+\beta+\frac{m+1}{2}} \bigg{[} \int_{t_0}^{\sigma t} e^{-2\chi(t-s)} s^{-\alpha -\beta-\frac{m+1}{2}} ds \\
+ \int_{\sigma t}^{t} e^{-2\chi(t-s)} s^{-\alpha-\beta-\frac{m+1}{2}} ds  \bigg{]} \\
\leq (C(\nu,\chi,\alpha,m+1)+\epsilon)\chi t^{\alpha+\beta+\frac{m+1}{2}} \bigg{[} e^{-2\chi t(1-\sigma)}\int_{t_0}^{{\sigma } t}  s^{-\alpha-\beta-\frac{m+1}{2}} ds \\+ ({\sigma } t)^{-\alpha-\beta-\frac{m+1}{2}}\int_{{\sigma } t}^{t} e^{-2\chi(t-s)} ds  \bigg{]} \\
\leq (C(\nu,\chi,\alpha,m+1)+\epsilon)\chi \bigg{[} t^{\alpha+\beta+\frac{m+1}{2}}e^{-2\chi t(1-\sigma)} \bigg{(} 2\frac{t_0^{-\alpha-\beta-\frac{m-1}{2}} - ({\sigma } t)^{-\alpha-\beta-\frac{m-1}{2}}}{2\alpha +2\beta + m -1} \bigg{)} \\ + \sigma^{-\alpha-\beta -\frac{m+1}{2}}\bigg{(} \frac{1- e^{-2\chi t (1-\sigma)}}{2\chi} \bigg{)} \bigg{]}.
\end{split}
\end{equation*}
Letting, in this order, $\epsilon \to 0^+$, taking the $\limsup$ as $t \to \infty$ and letting ${\sigma } \to 1^-$, we get
\begin{equation*}
\limsup_{t \rightarrow \infty} t^{\alpha+\beta+\frac{m+1}{2}} \int_{t_0}^{t} e^{-2\chi(t-s)}\norm{e^{\gamma \Delta(t-s)}D^{m+1} \Eu(\cdot,s)}_\LL ds \leq \frac{1}{2} C(\nu,\chi,\alpha,m+1).
\end{equation*}
which leads to the desired inequality \eqref{error_inequality_w}. \qed
\section{Additional comments and the fundamental inequality for other related systems}
We present some direct consequences and extensions of the results previous results. 
We also apply the previous arguments to quickly obtain results for other related systems.  
\subsection{Direct consequences and extensions}
From the results above, we can immediately get, through standard interpolations, some general inequalities.
Precisely,
\begin{equation*}
	\limsup_{t \to \infty} t^{\alpha + \frac{s}{2}} \norm{\zz}_{\dot{H}^s(\RR^3)} \leq \min_{m>s}\, [\,K_{\alpha,m}\,]^{\frac{s}{m}} \nu^{-\frac{m}{s}} \lambda_{0}(\alpha),
\end{equation*}
\begin{equation*}
	\limsup_{t \to \infty} t^{\alpha + \frac{m}{2} + \frac{3}{4}} \norm{D^m\zz}_{L^\infty(\RR^3)} \leq ( K_{\alpha,m}\,\nu^{-\frac{m}{2}})^{1/4} ( K_{\alpha,m+2}\, \nu^{-\frac{m}{2}-1})^{3/4}\, \lambda_{0}(\alpha) ,
\end{equation*}
and
\begin{equation*}
	\limsup_{t \to \infty} t^{\alpha + \frac{m}{2} + \frac{3(p-2)}{4p}} \norm{D^m \zz}_{L^p(\RR^3)} \leq C_{\nu,m,\alpha,p}\,\lambda_{0}(\alpha) \mbox{,\,\, for all } 2 \leq p < \infty.	
\end{equation*}
\begin{rem}	
	Similar versions of the inequalities above can be obtained for $\Ez$ and $\Ew$.	It is also important to note that the main theorems \ref{thm1} and \ref{thm2} imply the following interesting decays:
	\begin{subequations}
		\begin{equation*}
		\norm{D_t^k D_x^m \zz(\cdot,t)}_\LP = O(t^{-\alpha -k -\frac{m}{2} -\frac{3(p-2)}{4p}}),
		\end{equation*}
		\begin{equation*}
		\norm{D_t^k D_x^m \ww(\cdot,t)}_\LP = O(t^{-\alpha -k -\frac{m+1}{2} -\frac{3(p-2)}{4p}}),
		\end{equation*}
		\begin{equation*}
		\norm{D_t^k D_x^m \Ez(\cdot,t)}_\LP = O(t^{-\alpha -\beta -k -\frac{m}{2} -\frac{3(p-2)}{4p}}),
		\end{equation*}
		\begin{equation*}
		\norm{D_t^k D_x^m \Ew(\cdot,t)}_\LP = O(t^{-\alpha - \beta -k -\frac{m+1}{2} -\frac{3(p-2)}{4p}}),
		\end{equation*}
		where $2 \leq p \leq \infty$ and $\beta$ is as in Theorem \ref{thm2}.
	\end{subequations}
\end{rem}

\subsection{Other Systems}\label{other_systems}
As mentioned in the introduction, the technique presented can be used to obtain similar estimates for other dissipative type problems. Let us now give some examples.
\subsubsection{Dissipative linear system}
Consider 
\begin{equation}\label{eq0}
v_{t}  = \LLL v
\end{equation}
where $\LLL : \RR^3 \to (L^2(\RR^3))^3$ is a pseudodifferential operator with symbol $M(\xi)$ such that
\begin{equation*}
M(\xi) = P^{-1}(\xi)D(\xi)P(\xi), \mbox{ \,\,\,\,} \xi-a.e.,
\end{equation*}
where $P(\xi) \in O(3)$ and $D(\xi)_{ij} = -c_i |\xi|^{2\gamma} \delta_{ij}$, 
for $c_i>c>0$ and $0<\gamma \leq 1$. For a study on the decay rates of solutions for these types of equations involving the {\it decay character}, see \cite{MR3493117,MR2493562, MR3355116}. In fact, this approach assures that condition $\lambda_0 ( \alpha ) < \infty$ (condition \eqref{eqn:lambda-assumption})
for certain $\alpha>0$ depending on the initial data. 

Let $\Lambda = (-\Delta)^{1/2}$. Applying $ \Lambda^{m\gamma}$ in equation \eqref{eq0} and taking the dot product with $(t-t_0)^{2\alpha + m + \delta} \Lambda^{m\gamma} v$, we get
\begin{equation}
\begin{split}
	\frac{1}{2}(t-t_0)^{2\alpha + m + \delta} \dfrac{d}{dt} \norm{v(t)}^2_{\dot{H}^{m \gamma}} = (t-t_0)^{2\alpha + m + \delta}  \langle |\xi|^{m\gamma} \hat{v}, M |\xi|^{m\gamma} \hat{v} \rangle_{L^2} \\
	= -(t-t_0)^{2\alpha + m + \delta} \langle (-D)^{1/2} P |\xi|^{m\gamma} \hat{v}, (-D)^{1/2} P |\xi|^{m\gamma} \hat{v} \rangle_{L^2} \\
	= -(t-t_0)^{2\alpha + m + \delta} \int_{\RR^n} |(-D)^{1/2} P |\xi|^{m\gamma} \hat{v}|^2 d\xi \\\leq -C (t-t_0)^{2\alpha + m + \delta}\int_{\RR^n} |\xi|^{2(m+1)\gamma} |\hat{v}|^2 d\xi = -C (t-t_0)^{2\alpha + m + \delta}\norm{v(t)}^2_{\dot{H}^{(m+1)\gamma}}.
\end{split}
\end{equation}
Therefore, 
\begin{equation}
\begin{split}
	(t-t_0)^{2\alpha + m + \delta} \norm{v(t)}^2_{\dot{H}^{m \gamma}} + 2C \int_{t_0}^{t} (t-t_0)^{2\alpha + m + \delta}\norm{v(s)}^2_{\dot{H}^{(m+1)\gamma}} ds\\
	 \leq (2\alpha + m + \delta) \int_{t_0}^{t} (t-t_0)^{2\alpha + m -1+ \delta} \norm{v(s)}^2_{\dot{H}^{m \gamma}}ds.
\end{split}
\end{equation}
Using the same induction argument as in the proof of Theorem \ref{thm1}, one has
\begin{equation}\label{linear_dissipative_inequality}
	\limsup_{t \rightarrow \infty} t^{\alpha + m/2} \norm{v(t)}^2_{\dot{H}^{m \gamma}} \leq K_{\alpha,m} C^{-m/2}  \limsup_{t \rightarrow \infty} t^{\alpha} \norm{v(t)}_\LT
\end{equation}
where
\begin{equation}
K_{\alpha,m} =  \min_{\delta>0} \Bigg\{ \delta^{-1/2} \prod_{j=0}^{m} (\alpha + \frac{j}{2} +\delta)^{1/2}  \Bigg\}.
\end{equation}
\subsubsection{Tropical climate model equations}

Consider the system
\begin{subequations}\label{tropical_eq}
	\begin{equation}
		\uu_t + (\uu \cdot \nabla)\uu - \mu \Delta \uu + \nabla p + \nabla \cdot (\vv \otimes \vv) = 0,
	\end{equation}
	\begin{equation}
		\vv_t + (\uu \cdot \nabla) \vv - \nu \Delta \vv + \nabla \theta + (\vv \cdot \nabla)\uu = 0,
	\end{equation}
	\begin{equation}
		\theta_t + (\uu \cdot \nabla)\theta - \eta \Delta \theta + \nabla \cdot \vv = 0,
	\end{equation}
	\begin{equation}
		\nabla \cdot \uu = 0,
	\end{equation}
\end{subequations}
where $\uu = (u_1(x, t), u_2(x, t), u_3(x, t))$ and $\vv = (v_1(x, t), v_2(x, t), v_3(x, t))$ are the barotropic mode and the first baroclinic mode of the velocity, respectively; $\theta = \theta(x, t)$ and $p = p(x, t)$ represent the scalar temperature and scalar pressure, respectively. The coefficients $\mu$, $\nu$, and $\eta$ are nonnegative.
System \eqref{tropical_eq} was first introduced in \cite{MR2119930} without any dissipation terms ($\mu = \nu = \eta = 0$). For a study on its regularity and well-posedness, see \cite{MR3479523, MR2342696, MR2890308, MR3237881, MR3264417, MR3455664, MR3949522}.

Following the same techniques presented in the proof of Theorem \ref{thm1} one can obtain the inequality
\begin{equation}
	\begin{split}
		(t-t_0)^{2\alpha + m + \delta} \norm{(D^m \uu, D^m \vv, D^m \theta)(\cdot,t)}_\LT^2 +\\ (2-\epsilon)\min{(\mu,\nu,\eta)} \int_{t_0}^{t} (s - t_0)^{2\alpha + m+ \delta} \norm{(D^{m+1} \uu, D^{m+1} \vv, D^{m+1} \theta)(\cdot,s)}_\LT^2 ds \\
		\leq (2\alpha + m + \delta) \int_{t_0}^{t} (s-t_0)^{2\alpha + m + \delta} \norm{(D^m \uu, D^m \vv, D^m \theta)(\cdot,s)}_\LT^2,
	\end{split}
\end{equation}
for each integer $m \geq 0$. Through the same induction argument as in the proof of Theorem \ref{thm1}, one gets
\begin{equation}
	\begin{split}
		\limsup_{t \to \infty}\, t^{\alpha+\frac{m}{2}} \norm{(D^m \uu, D^m \vv, D^m \theta)(\cdot,t)}_\LT\\ \leq  K_{\alpha,m} (\min\{\mu,\nu,\eta\})^{-m/2}\;\; 	\limsup_{t \to \infty}\, t^{\alpha} \norm{(\uu,\vv,\theta)(\cdot,t)}_\LT,
	\end{split}
\end{equation}
where
\begin{equation}
	K_{\alpha,m} =  \min_{\delta>0} \Bigg\{ \delta^{-1/2} \prod_{j=0}^{m} (\alpha + \frac{j}{2} +\delta)^{1/2}  \Bigg\}.
\end{equation}

\subsubsection{Navier-stokes with rotational framework}
Consider the system 
\begin{equation}\label{eq_coriolis}
\begin{split}
	\uu_t + (\uu \cdot \nabla)\uu + \omega \mathbb{J} \uu + \nabla p = & \mu \Delta \uu \mbox{ \,\,\,\, in } \RR^3 \times (0, \infty) ,  \\
	\nabla \cdot \uu =& 0 \mbox{ \,\,\,\,\,\,\,\,\,\,\,\,\,\, in } \RR^3 \times (0, \infty) , \\
	\uu(\cdot,0) =&\uu_0 \in L^2_{\sigma} (\RR^3) ,
\end{split}
\end{equation}
where $ \omega \in \RR $ is the Coriolis parameter and $ \mathbb{J}\uu := {\bf e}_3 \wedge \uu $ is the Coriolis force acting on the fluid. Observe that $\mathbb{J}u = (-u_2,u_1,0)$ and therefore this term does not affect the energy inequality obtained for $\uu$. Following the same procedure as in the proof of Theorem \ref{thm1}, we get
\begin{equation}
\begin{split}
	(t-t_0)^{2\alpha + m + \delta} \norm{D^m \uu(\cdot,t)}^2_\LT +  (2-\epsilon)\nu \int_{t_0}^{t} (s-t_0)^{2\alpha + m + \delta} \norm{D^{m+1} \uu(\cdot,s)}^2_\LT ds \\ \leq (2\alpha + m + \delta) \int_{t_0}^{t} (s-t_0)^{2\alpha + m -1 + \delta} \norm{D^m \uu(\cdot,s)}^2_\LT,
\end{split}
\end{equation}
for each integer $ m \geq 0$. Using the same induction argument as in the proof of Theorem \ref{thm1}, one has
\begin{equation}
\begin{split}
	\limsup_{t \to \infty}\, t^{\alpha+\frac{m}{2}} \norm{D^m \uu(\cdot,t)}_\LT \leq  K_{\alpha,m} \mu^{-m/2}\;\; 	\limsup_{t \to \infty}\, t^{\alpha} \norm{\uu(\cdot,t)}_\LT
\end{split}
\end{equation}
where
\begin{equation}
K_{\alpha,m} =  \min_{\delta>0} \Bigg\{ \delta^{-1/2} \prod_{j=0}^{m} (\alpha + \frac{j}{2} +\delta)^{1/2}  \Bigg\}.
\end{equation}

For the existence of a global mild solution assuming small data in $ H^{1/2}$, see \cite{MR2609321}.
The equations \eqref{eq_coriolis} are also related to the Navier-Stokes flow past rotating obstacles (see e.g.  \cite{MR2042672}). 
\subsubsection{Magneto-micropolar equations} 
When the magnetic field is taken into account in system \eqref{micropolar}, we get the so called magneto-micropolar system
\begin{equation}
\label{eqn:MHDmicropolar}
\left\{
\begin{aligned}    
\partial_t \uu +( \uu \cdot \nabla) \uu + \nabla p& =  (\mu + \chi) \Delta \uu + \chi \nabla \times \ww + (\bb \cdot \nabla)  \bb,  \\
\partial_t \ww + (\uu \cdot \nabla) \ww & =  \gamma \Delta \ww +  \nabla (\nabla \cdot \ww) + \chi \nabla \times \uu -2 \chi \ww,  \\
\partial_t \bb + (\uu \cdot \nabla) \bb & =  \nu \Delta \bb + (\bb \cdot \nabla) \uu, \\
\nabla \cdot \uu (\cdot,t) & =  \nabla \cdot \bb(\cdot,t)\, = 0,
\end{aligned}
\right.
\end{equation}
with initial data 
$ \zz_0 =(\:\!\uu_0, {\bf w}_0, \bb_0) \in L^2 _{\sigma}(\mathbb{R}^{3}) \!\times\! L^2 (\mathbb{R}^{3}) \times L^2 _{\sigma}(\mathbb{R}^{3}) $. The results on eventual smoothness, existence and uniqueness for the micropolar fluids system also hold for the magneto-micropolar system \eqref{eqn:MHDmicropolar}. One can derive results completely analgous to Theorems \ref{thm1} and \ref{thm2}, where, in this case, 
$ \zz(\cdot,t):= (\uu,\ww,\bb)(\cdot,t)$.   
\subsubsection{Other  Parabolic Systems} Let $f_j: \RR^n \to \RR^n$, $n=2,3,4$, $1 \leq j \leq N$, denote smooth vector fields with symmetric Jacobian $A_j(u) = D_u f_j(u)$. Assume $|f_j(u)| \leq C|u|^2$. Consider the parabolic system 
\begin{equation}
	u_t +\sum_{j=1}^{N}A_j(u)D_j u + D_j(A_j(u)u) = Pu
\end{equation}
where the first order terms have anti-symmetric form and the linear constant coefficient operator $P$ satisfies
\begin{equation*}
	P = \sum_{j=1}^{N} R_j D_j + \sum_{i,j=1}^{N} V_{ij}D_i D_j,
\end{equation*}
\begin{equation*}
	\hat{P}(k) + \hat{P}^*(k) \leq -c|k|^2 I , 
\end{equation*}
for some $c>0$. Since

\begin{equation*}
	\int u^T D_j (A_j(u)u)dx = - \int (D_j u^T) A_j(u) u dx
\end{equation*}
and $A_j = A^T_j$, an analogous reasoning as used in Theorem \ref{thm1} can be used to show that
\begin{equation*}
	\limsup_{t \to \infty} t^{\alpha +m/2} \norm{D^m u(\cdot,t)}_{L^2(\RR^n)} \leq\,C \lambda_0(\alpha),
\end{equation*}
where $ \lambda_0 (\alpha) :=\limsup_{t \to \infty} t^{\alpha} \norm{u(\cdot,t)}_{L^2(\RR^n)} $.



\bibliographystyle{plain}
\bibliography{BrazGuterresPerusatoZinganoBiblio}{}
\end{document}